\newtheorem{theorem}{Theorem}
\newtheorem{lemma}{Lemma}[section]
\newtheorem{corollary}{Corollary}
\newtheorem{remark}{Remark}[section]
\newcommand{\reals}{\mathbb{R}}
\newcommand{\ind}{\mathbf{1}}
\newcommand{\e}{\mathbb{E}}
\newcommand{\ee}{\mathrm{e}}
\newcommand{\p}{\mathbb{P}}
\newcommand{\qscale}{W^{(q)}}
\newcommand{\pscale}{W^{(p)}}
\newcommand{\wapq}{\mathcal W_a^{(p,q)}}
\newcommand{\wapqn}{\mathcal W_{a,n}^{(p,q)}}
\newcommand{\qscaleprime}{W^{(q) \prime}}
\newcommand{\zscale}{Z^{(q)}}
\newcommand{\zp}{Z^{(p)}}
\newcommand{\zapq}{\mathcal Z_a^{(p,q)}}
\newcommand{\zapqn}{\mathcal Z_{a,n}^{(p,q)}}
\newcommand{\drift}{\mathtt d}
\begin{document}

\title[Occupation times of L\'evy processes]{Occupation times of intervals until first passage times for spectrally negative L\'evy processes}

\author[Loeffen, Renaud and Zhou]{Ronnie L. Loeffen}
\address{School of Mathematics, University of Manchester, Oxford Road, Manchester M13 9PL, United Kingdom}
\email{ronnie.loeffen@manchester.ac.uk}

\author[]{Jean-Fran\c{c}ois Renaud}
\address{D\'epartement de math\'ematiques, Universit\'e du Qu\'ebec \`a Montr\'eal (UQAM), 201 av.\ Pr\'esident-Kennedy, Montr\'eal (Qu\'ebec) H2X 3Y7, Canada}
\email{renaud.jf@uqam.ca}

\author[]{Xiaowen Zhou}
\address{Department of Mathematics and Statistics, Concordia University, 1455 de Maisonneuve Blvd W., Montr\'{e}al (Qu\'{e}bec) H3G 1M8, Canada}
\email{xzhou@mathstat.concordia.ca}

\date{\today}

\keywords{Occupation times, spectrally negative L\'{e}vy processes, fluctuation theory, scale functions.}

\begin{abstract}
In this paper, we identify Laplace transforms of occupation times of intervals until first passage times for spectrally negative L\'evy processes. New analytical identities for scale functions are derived and therefore the results are explicitly stated in terms of the scale functions of the process. Applications to option pricing and insurance risk models are also presented.
\end{abstract}

\maketitle

\section{Introduction and main results}

In this paper, we are interested in the joint Laplace transforms of
$$
\left( \tau_0^- , \int_0^{\tau_0^-} \ind_{(a,b)} (X_s) \mathrm{d}s \right) \quad \text{and} \quad \left( \tau_c^+ , \int_0^{\tau_c^+} \ind_{(a,b)} (X_s) \mathrm{d}s \right) ,
$$
where $X = (X_t)_{t \geq 0}$ is a spectrally negative L\'evy process, where
$$
\tau_0^- = \inf \{t > 0 \colon X_t < 0 \} \quad \text{and} \quad \tau_c^+ = \inf \{t > 0 \colon X_t > c \} ,
$$
and where $0\leq a \leq b\leq c$.  Recently, Landriault \textit{et al.} \cite{landriaultetal2011} and Kyprianou \textit{et al.} \cite{kyprianouetal2012} have studied occupation times of half lines for spectrally negative L\'evy processes, though the latter article considers a more general process, namely a refracted spectrally negative L\'evy process. The main difference between this paper and the papers  \cite{landriaultetal2011} and \cite{kyprianouetal2012} is that by using some of the techniques in \cite{kyploeffen}, we find considerably simpler expressions, which further allow us to establish a more general set of identities involving occupation times of spectrally negative L\'evy processes. Note that occupation times appear both in option pricing and in insurance risk models; we will mention two applications later on.

\bigskip

We now briefly introduce spectrally negative L\'evy processes and the associated scale functions, before stating our main results. Let $X = (X_t)_{t \geq 0}$ on the filtered probability space $(\Omega, (\mathcal F_t)_{t\geq0},\mathbb P)$ be a spectrally negative L\'evy process, that is a  process with stationary and independent increments and no positive jumps. Hereby we exclude the case that $X$ is the negative of a subordinator, i.e. we exclude the case of $X$ having decreasing paths. The law of $X$ such that $X_0 = x$ is denoted by $\p_x$ and the corresponding expectation by $\e_x$. We write $\p$ and $\e$ when $x=0$. As the L\'{e}vy process $X$ has no positive jumps, its Laplace transform exists and is given by
$$
\e \left[ \mathrm{e}^{\lambda X_t} \right] = \mathrm{e}^{t \psi(\lambda)} ,
$$
for $\lambda,t \geq 0$, where
$$
\psi(\lambda) = \gamma \lambda + \frac{1}{2} \sigma^2 \lambda^2 + \int^{\infty}_0 \left( \mathrm{e}^{-\lambda z} - 1 + \lambda z \ind_{(0,1]}(z) \right) \Pi(\mathrm{d}z) ,
$$
for $\gamma \in \reals$ and $\sigma \geq 0$, and where $\Pi$ is a $\sigma$-finite measure on $(0,\infty)$ such that
$$
\int^{\infty}_0 (1 \wedge z^2) \Pi(\mathrm{d}z) < \infty .
$$
We call the measure $\Pi$   the L\'{e}vy measure of $X$, while we refer to $(\gamma,\sigma,\Pi)$  as the L\'evy triplet of $X$. Note that for convenience we define the L\'evy measure in such a way that it is a measure on the positive half line instead of the negative half line.
Further, note that $\e \left[ X_1 \right] = \psi'(0+)$.  The process $X$ has paths of bounded variation if and only if $\sigma=0$ and $\int^{1}_0 z \Pi(\mathrm{d}z)<\infty$. In that case we denote by  $\drift:=\gamma+\int^{1}_0 z \Pi(\mathrm{d}z)$ the so-called drift of $X$.

For an arbitrary spectrally negative L\'evy process, the Laplace exponent $\psi$ is strictly convex and $\lim_{\lambda \to \infty} \psi(\lambda) = \infty$. Thus, there exists a function $\Phi \colon [0,\infty) \to [0,\infty)$ defined by $\Phi(q) = \sup \{ \lambda \geq 0 \mid \psi(\lambda) = q\}$ (its right-inverse) such that
$$
\psi ( \Phi(q) ) = q, \quad q \geq 0 .
$$
We have that $\Phi(q)=0$ if and only if $q=0$ and $\psi'(0+)\geq0$.

We now recall the definition of the $q$-scale function $W^{(q)}$. For $q \geq 0$, the $q$-scale function of the process $X$ is defined on $[0,\infty)$ as the continuous function with Laplace transform on $[0,\infty)$ given by
\begin{equation}\label{def_scale}
\int_0^{\infty} \mathrm{e}^{- \lambda y} W^{(q)} (y) \mathrm{d}y = \frac{1}{\psi(\lambda) - q} , \quad \text{for $\lambda > \Phi(q)$.}
\end{equation}
This function is unique, positive and strictly increasing for $x\geq0$ and is further continuous for $q\geq0$. We extend $W^{(q)}$ to the whole real line by setting $W^{(q)}(x)=0$ for $x<0$.  We write $W = W^{(0)}$ when $q=0$.
We will also frequently use the following function
$$
\zscale(x) = 1 + q \int_0^x{W}^{(q)}(y)\mathrm dy, \quad x\in\mathbb R.
$$

We recall some of the properties of the $q$-scale function $W^{(q)}$ and its use in fluctuation theory. Most results are taken, or can easily be derived, from \cite{kyprianou2006}. The initial values of $W^{(q)}$ is known to be
\begin{equation*}
W^{(q)}(0)=
\begin{cases}
1/\drift & \text{when $\sigma=0$ and $\int_{0}^1 z \Pi(\mathrm{d}z) < \infty$},  \\
0 & \text{otherwise},
\end{cases}
\end{equation*}
where we used the following definition: $W^{(q)}(0) = \lim_{x \downarrow 0} W^{(q)}(x)$. 
Now, for $a \in \reals$, define
$$
\tau_a^- = \inf \{t > 0 \colon X_t < a \} ,
$$
and
$$
\tau_a^+ = \inf \{t > 0 \colon X_t > a \} ,
$$
with the convention $\inf\emptyset=\infty$. It is well known that, if $a>0$ and $x \leq a$, then the solution to the two-sided exit problem is given by
\begin{equation}\label{E:exitabove}
\e_x \left[ \ee^{-q \tau_a^+} ; \tau_a^+ < \tau_0^- \right] = \frac{W^{(q)}(x)}{W^{(q)}(a)} ,
\end{equation}
\begin{equation}\label{E:exitbelow}
\e_x \left[ \ee^{-q \tau_0^-} ; \tau_0^- < \tau_a^+ \right] = Z^{(q)}(x) - Z^{(q)}(a) \frac{W^{(q)}(x)}{W^{(q)}(a)} ,
\end{equation}
where, for a random variable $Y$ and an event $A$, $\e [Y;A] := \e [Y \ind_A]$. Also, it is known that, for $a \leq x \leq b$ and $f$ a positive, measurable function, we have
\begin{multline}\label{E:discounted_deficit}
\e_x \left[ \mathrm{e}^{-q \tau_a^-}f(X_{\tau_a^-});  \tau_a^- < \tau_b^+ \right]  = f(a) \frac{\sigma^2}{2} \left[ \qscaleprime(x-a) - \qscale(x-a) \frac{\qscaleprime(b-a)}{\qscale(b-a)} \right] \\
+ \int_0^{b-a} \mathrm{d}y\int_{y}^\infty f(y-\theta+a) \Pi(\mathrm{d}\theta) \left[ \frac{ \qscale (b-a-y)}{\qscale(b-a)}\qscale(x-a) - \qscale(x-a-y) \right],
\end{multline}
where $\qscaleprime(x)$ is the derivative of $W^{(q)}(x)$, which is well-defined if $\sigma>0$. The first term of this identity corresponds to the case when $X_{\tau_a^-}=a$, a behaviour called creeping.

For more details on spectrally negative L\'{e}vy processes and fluctuation identities, the reader is referred to \cite{kyprianou2006}. Further information, examples and numerical techniques related to the computation of scale functions can be found in \cite{kuznetsovetal2011}.

\subsection{Main results}\label{sec_mainresults}

For our main results we first need to introduce three auxiliary functions. We note that
by taking Laplace transforms on both sides and using \eqref{def_scale} we can easily check that the following two equalities hold:
\begin{equation}\label{fullconvolutions}
 \begin{split}
 (q-p)\int_0^a W^{(p)}(a-y) W^{(q)}(y)\mathrm{d}y   = &   W^{(q)}(a) - W^{(p)}(a), \\
(q-p)\int_0^a W^{(p)}(a-y) Z^{(q)}(y)\mathrm{d}y = &  Z^{(q)}(a) - Z^{(p)}(a).
 \end{split}
\end{equation}
We now introduce the following two functions for $p,q\geq0$ and $x\in\mathbb R$,
\begin{equation}\label{convequiv2}
\begin{split}
 \wapq(x) :=&  W^{(p+q)}(x) - q \int_0^a W^{(p+q)}(x-y) \pscale(y) \mathrm{d}y \\
= &  W^{(p)}(x) + q \int_a^x W^{(p+q)}(x-y)W^{(p)}(y)\mathrm d y, \\
\zapq(x) :=&  Z^{(p+q)}(x) - q \int_0^a W^{(p+q)}(x-y) \zp(y) \mathrm{d}y \\
= &  Z^{(p)}(x) + q \int_a^x W^{(p+q)}(x-y)Z^{(p)}(y)\mathrm d y,
\end{split}
\end{equation}
where the second representations of $\wapq(x)$ and $\zapq(x)$ follow from \eqref{fullconvolutions}. We will use both representations throughout the text.
We further introduce, for $p\geq0$ and $q\in\mathbb R$ such that $p+q\geq0$, the function
\begin{equation*}
 \mathcal H^{(p,q)}(x) = \mathrm e^{\Phi(p)x} \left( 1 + q\int_0^{x}  \mathrm e^{-\Phi(p)y} W^{(p+q)}(y) \mathrm dy \right), \quad x\in\mathbb R.
\end{equation*}
Note that  $\mathcal H^{(p,q)}(x)=\mathrm e^{\Phi(p)x}$ for $x\leq0$ and that the Laplace transform of $\mathcal H^{(p,q)}$ on $[0,\infty)$ is explicitly given by
\begin{equation*}
 \int_0^\infty \mathrm e^{-\lambda x} \mathcal H^{(p,q)}(x) \mathrm dx = \frac{1}{\lambda-\Phi(p)}  \left( 1 + \frac {q  }  {\psi(\lambda)-p-q} \right), \quad \lambda>\Phi(p+q).
\end{equation*}
We now state our two main results.
\begin{theorem}\label{T:stopbelow_double}
For $0\leq a \leq b\leq c$, $p,q\geq 0$ and $0\leq x\leq c$,
\begin{multline*}
\e_x \left[ \mathrm{e}^{- p \tau_0^- - q \int_0^{\tau_0^-} \ind_{(a,b)} (X_s) \mathrm{d}s } ; \tau_0^- < \tau_c^+ \right] =
\zapq(x)    - q\int_b^x W^{(p)}(x-z)\zapq(z)\mathrm dz \\  - \frac{\zapq(c) - q\int_b^c W^{(p)}(c-z)\zapq(z)\mathrm dz }{ \wapq(c) - q\int_b^c W^{(p)}(c-z)\wapq(z)\mathrm dz} \left( \wapq(x) - q\int_b^x W^{(p)}(x-z)\wapq(z)\mathrm dz \right).
\end{multline*}
\end{theorem}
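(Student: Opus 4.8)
The plan is to write $v(x)$ for the left-hand side and to reduce the three-rate problem---discounting at rate $p$ on $[0,a)\cup[b,c]$ and at rate $p+q$ on $(a,b)$---to a sequence of standard two-sided exit problems by repeatedly applying the strong Markov property at the level crossings $a$ and $b$, and then to collapse the resulting expressions using the convolution identities \eqref{fullconvolutions}. The shape of the claimed answer, namely $v(x)=G(x)-\frac{G(c)}{F(c)}F(x)$ with $G,F$ the two bracketed functions, is exactly that of the classical exit-below identity \eqref{E:exitbelow}; moreover, since $\pscale(x-z)=0$ for $z>x$, the correction integrals $q\int_b^{x}\pscale(x-z)(\cdot)\,\mathrm dz$ vanish for $x\le b$, so that $F=\wapq$ and $G=\zapq$ on $[0,b]$ and the surgery only alters the functions on $[b,c]$. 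This dictates a two-step proof: first treat the lower level $a$ (producing $\wapq,\zapq$), then the upper level $b$ (producing the correction integrals).

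\emph{Step one: the building blocks on $[0,b]$.} I would first establish the two-sided exit identities for the \emph{two}-rate problem in which the interval's upper endpoint coincides with the exit level, that is, for $0\le x\le b$,
\begin{equation*}
\e_x\!\left[\ee^{-p\tau_b^+-q\int_0^{\tau_b^+}\ind_{(a,b)}(X_s)\,\mathrm ds};\,\tau_b^+<\tau_0^-\right]=\frac{\wapq(x)}{\wapq(b)},
\end{equation*}
\begin{equation*}
\e_x\!\left[\ee^{-p\tau_0^--q\int_0^{\tau_0^-}\ind_{(a,b)}(X_s)\,\mathrm ds};\,\tau_0^-<\tau_b^+\right]=\zapq(x)-\frac{\zapq(b)}{\wapq(b)}\wapq(x).
\end{equation*}
Both collapse to \eqref{E:exitabove}--\eqref{E:exitbelow} when $q=0$ and normalize/vanish correctly at $x=b$. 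To prove them I would condition on $\tau_a^+$ and $\tau_a^-$: below $a$ only rate $p$ is in force, so \eqref{E:exitabove}--\eqref{E:exitbelow} apply verbatim and give $v$ on $[0,a]$ as an affine function of its value at $a$; above $a$ the rate is $p+q$, so the strong Markov property at $\tau_a^-$ together with \eqref{E:discounted_deficit} (with $q$ replaced by $p+q$ and levels $a,b$) expresses $v$ on $[a,b]$ through its values on $(-\infty,a]$. Substituting the candidate expressions and reducing the convolutions of $W^{(p+q)}$ against $\pscale,\zp$ by means of \eqref{fullconvolutions} and the definitions \eqref{convequiv2} should verify both identities.

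\emph{Step two: restoring rate $p$ above $b$.} For $x\le b$ a single application of the strong Markov property at $\tau_b^+$ (reached continuously, since $X$ has no positive jumps) gives
\begin{equation*}
v(x)=\left(\zapq(x)-\frac{\zapq(b)}{\wapq(b)}\wapq(x)\right)+\frac{\wapq(x)}{\wapq(b)}\,v(b),
\end{equation*}
using the two building blocks of Step one. For $x\in[b,c]$, where no occupation accrues, I would instead condition on the first passage below $b$: since reaching $c$ first contributes nothing to the event $\tau_0^-<\tau_c^+$,
\begin{equation*}
v(x)=\e_x\!\left[\ee^{-p\tau_b^-}v(X_{\tau_b^-});\,\tau_b^-<\tau_c^+\right],\qquad x\in[b,c],
\end{equation*}
which is evaluated with \eqref{E:discounted_deficit} at rate $p$ and levels $b,c$, feeding in the values of $v$ on $(-\infty,b]$ from the previous display (and $v\equiv1$ on $(-\infty,0)$). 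It is this last computation that should generate the correction integrals $q\int_b^{x}\pscale(x-z)(\cdot)\,\mathrm dz$. Finally, the single free constant $v(b)$ is pinned down by the boundary condition $v(c)=0$, and matching the two regions produces exactly the ratio $G(c)/F(c)$, hence the stated formula.

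The main obstacle is the algebraic collapse in Step two: the right-hand side of \eqref{E:discounted_deficit} contains both a creeping term proportional to $\sigma^2$ and a double integral against the L\'evy measure $\Pi$, and one must show that, after inserting the piecewise form of $v$ below $b$, these combine into the single clean convolution $q\int_b^{x}\pscale(x-z)\zapq(z)\,\mathrm dz$ (and its $\wapq$ analogue). Carrying this out uniformly---without assuming $\sigma>0$ or smoothness of the scale functions, and treating the jump and creeping contributions on the same footing---is the delicate part, and it is precisely where the convolution identities \eqref{fullconvolutions} do the essential work. An alternative, which sidesteps the explicit exit computations but trades them for a domain/regularity analysis, is to verify directly that $\ee^{-p(t\wedge\tau)-q\int_0^{t\wedge\tau}\ind_{(a,b)}(X_s)\,\mathrm ds}\,v^\ast(X_{t\wedge\tau})$ is a martingale for the candidate $v^\ast$ (with $\tau=\tau_0^-\wedge\tau_c^+$), using that $v^\ast$ solves $(\mathcal L-p)v^\ast=0$ off $(a,b)$ and $(\mathcal L-p-q)v^\ast=0$ on $(a,b)$, and then applying optional stopping together with $v^\ast(c)=0$ and $v^\ast\equiv1$ below $0$.
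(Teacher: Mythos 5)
Your overall architecture is the same as the paper's: decompose via the strong Markov property at the levels $a$ and $b$, obtain the unknown function on each region as an affine expression in its boundary values, and solve. But there are two genuine gaps, and they sit exactly at the two places where the paper has to work hardest. First, the ``algebraic collapse'' that you yourself flag as the main obstacle cannot be carried out with \eqref{fullconvolutions} and \eqref{convequiv2} alone, because what must be collapsed are integrals against the L\'evy measure, about which those identities say nothing: inserting the piecewise form of $v$ into \eqref{E:discounted_deficit} leaves you with expressions of the type
\begin{equation*}
\int_0^{c-b}\int_{(y,\infty)} v(y-\theta+b)\,\Pi(\mathrm d\theta)\, W^{(p)}(c-b-y)\,\mathrm dy,
\end{equation*}
and turning these into scale-function convolutions of $v$ is precisely the content of the paper's Lemma \ref{mainlemma} (via the identity \eqref{step2_b}), whose proof requires first verifying the exit property \eqref{mgproperty} for $W^{(q)}$, $Z^{(q)}$ and their translates, and then a Laplace-transform manipulation in the bounded variation case. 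Both your Step one (the evaluation of $\e_x\bigl[\mathrm e^{-(p+q)\tau_a^-}W^{(p)}(X_{\tau_a^-});\tau_a^-<\tau_b^+\bigr]$ hidden in ``substituting the candidate expressions'') and your Step two hinge on this lemma; without it, or an equivalent, the proof is incomplete.

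Second, your mechanism for determining the free constant $v(b)$ fails: $v(c)=0$ is not a usable boundary condition because it holds identically --- under $\p_c$ one has $\tau_c^+=0$ a.s.\ since $X$ is regular upward, and correspondingly the expression produced by \eqref{E:discounted_deficit} at levels $b,c$ vanishes at $x=c$ for every choice of $v(b)$ (both the creeping bracket and the jump bracket are zero there). The constant is instead pinned down by matching the regional formulas at an interior level ($x=b$, or $x=a$ as the paper does), and that matching yields information only because $W^{(p)}(0)\neq0$, i.e.\ only when $X$ has paths of bounded variation; for unbounded variation it degenerates to the tautology $v(b)=v(b)$. This is exactly why the paper proves the theorem first for bounded variation and then handles unbounded variation by a separate approximation argument ($X^n\to X$ a.s.\ uniformly on compact time intervals, convergence of the scale functions $W^{(p)}_n$, dominated convergence), a step your proposal omits entirely. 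The alternative martingale/generator argument you sketch at the end could in principle cover that case, but it requires $W^{(q)}$ to be smooth enough for $\mathcal L$ to act classically, which is false in general and not addressed --- presumably the reason the paper avoids that route.
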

\begin{theorem}\label{T:stopabove}
For  $0\leq a \leq b\leq c$, $p,q\geq 0$ and $0\leq x\leq c$,
\begin{equation*}
\e_x \left[ \mathrm{e}^{- p \tau_c^+ - q \int_0^{\tau_c^+} \ind_{(a,b)} (X_s) \mathrm{d}s } ; \tau_c^+ < \tau_0^- \right] =  \frac{  \wapq(x)  - q\int_b^x W^{(p)}(x-z)\wapq(z)\mathrm dz }{ \wapq(c) - q\int_b^c W^{(p)}(c-z)\wapq(z)\mathrm dz }.
\end{equation*}

\end{theorem}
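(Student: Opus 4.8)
The plan is to write the left-hand side as $f(x) := \e_x[ \ee^{-p\tau_c^+ - q\int_0^{\tau_c^+}\ind_{(a,b)}(X_s)\,\mathrm ds};\tau_c^+<\tau_0^-]$ and to abbreviate the conjectured numerator by $g(x) := \wapq(x) - q\int_b^x \pscale(x-z)\wapq(z)\,\mathrm dz$, so that the goal becomes $f(x)=g(x)/g(c)$; note that $f(c)=1$ and that, since $\pscale(x-z)=0$ for $z>x$, one has $g(x)=\wapq(x)$ for $x\le b$. I would first record the half-line identity $\e_x[\ee^{-p\tau_r^+ - q\int_0^{\tau_r^+}\ind_{(a,\infty)}(X_s)\,\mathrm ds};\tau_r^+<\tau_0^-]=\wapq(x)/\wapq(r)$ for $0\le x\le r$, which is exactly the $b=c$ instance of the present theorem (occupation of $(a,c)$ and of $(a,\infty)$ agree a.e.\ before $\tau_c^+$). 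It is proved by splitting the path at $\tau_a^+\wedge\tau_0^-$ and then at $\tau_a^-\wedge\tau_r^+$, applying the two-sided exit law \eqref{E:exitabove} with rate $p$ below $a$ and rate $p+q$ inside $(a,r)$, evaluating the undershoot below $a$ with \eqref{E:discounted_deficit}, and collapsing the resulting convolutions by means of \eqref{fullconvolutions} and the two representations \eqref{convequiv2} of $\wapq$.

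Next I would reduce the range $0\le x\le b$ to the single value $f(b)$. Applying the strong Markov property at $\tau_b^+$, using that $\ind_{(a,b)}(X_s)=\ind_{(a,\infty)}(X_s)$ for a.e.\ $s<\tau_b^+$, and the fact that $X$ creeps upwards so that $X_{\tau_b^+}=b$ on $\{\tau_b^+<\tau_0^-\}$, the half-line identity with $r=b$ yields $f(x)=f(b)\,\wapq(x)/\wapq(b)$ for $x\le b$. As $g(x)=\wapq(x)$ there, it remains only to treat $x\in[b,c]$ and to pin down $f(b)$.

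For $x\in[b,c]$ I would split the path at $\tau_c^+\wedge\tau_b^-$. On this interval $X$ stays in $(b,c)$, which is disjoint from $(a,b)$, so the occupation clock is frozen and only $\ee^{-ps}$ survives; using \eqref{E:exitabove} for the upward exit, the representation $f(y)=f(b)\wapq(y)/\wapq(b)$ for the level $X_{\tau_b^-}\le b$ reached at the downward passage, and $\wapq(y)=0$ for $y<0$, I obtain
\begin{equation*}
f(x)=\frac{\pscale(x-b)}{\pscale(c-b)}+\frac{f(b)}{\wapq(b)}\,\e_x\!\left[\ee^{-p\tau_b^-}\wapq(X_{\tau_b^-});\tau_b^-<\tau_c^+\right].
\end{equation*}
I would then evaluate the last expectation with \eqref{E:discounted_deficit} applied to the function $\wapq$, with lower level $b$, upper level $c$ and rate $p$, and prove --- this is the analytic heart of the argument and is one of the new scale-function identities announced in the abstract --- that
\begin{equation*}
\e_x\!\left[\ee^{-p\tau_b^-}\wapq(X_{\tau_b^-});\tau_b^-<\tau_c^+\right]=g(x)-\frac{g(c)}{\pscale(c-b)}\,\pscale(x-b),
\end{equation*}
the verification reducing, after interchanging the order of integration, to \eqref{fullconvolutions} and \eqref{convequiv2}.

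Substituting this back gives $f(x)=\frac{\pscale(x-b)}{\pscale(c-b)}\bigl(1-\tfrac{f(b)g(c)}{\wapq(b)}\bigr)+\frac{f(b)}{\wapq(b)}g(x)$, so the theorem follows precisely once $f(b)=\wapq(b)/g(c)=g(b)/g(c)$. In the bounded-variation case this is read off by evaluating the last relation at $x=b$, where $\pscale(0)=1/\drift\neq0$ forces the bracket to vanish; in the unbounded-variation case, where $\pscale(0)=0$, I would instead obtain it by a smooth-fit/continuity argument at $b$ (matching the left derivative coming from $f(x)=f(b)\wapq(x)/\wapq(b)$), or by approximating $X$ with bounded-variation spectrally negative processes and passing to the limit, as in \cite{kyploeffen}. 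I expect the main obstacle to be exactly the analytic identity for the undershoot functional (and its half-line analogue): turning the double $\Pi$-integral produced by \eqref{E:discounted_deficit} into the single convolution defining $g$, while tracking the creeping term $\tfrac{\sigma^2}{2}W^{(p)\prime}$, is where all the bookkeeping lives.
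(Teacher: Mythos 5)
Your probabilistic skeleton is exactly the paper's: the same three-region decomposition, the frozen occupation clock on $(b,c)$, the relation $f(x)=f(b)\wapq(x)/\wapq(b)$ for $x\le b$, the undershoot identity (which is precisely the paper's \eqref{defB}), and the determination of $f(b)$ by evaluating at $x=b$ and using $\pscale(0)=1/\drift\neq 0$ in the bounded variation case, with an approximation argument for unbounded variation. All of your intermediate formulas are correct. The problem is the step you yourself flag as the analytic heart: you claim that the double $\Pi$-integral produced by \eqref{E:discounted_deficit} collapses ``after interchanging the order of integration'' by means of \eqref{fullconvolutions} and \eqref{convequiv2}. Those convolution identities involve only scale functions and never mention $\Pi$, so Fubini plus \eqref{fullconvolutions}--\eqref{convequiv2} cannot, by themselves, reduce a quantity of the form $\int_0^{s}\int_{(y,\infty)} v(y-\theta+a)\,\Pi(\mathrm d\theta)\, W^{(p)}(s-y)\,\mathrm dy$ to scale functions. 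The missing ingredient is exactly the content of the paper's Lemma \ref{mainlemma}: one first establishes the martingale-type property \eqref{mgproperty} for $W^{(p)}$ (a strong Markov argument --- equivalently, the $q=0$ instance of your own splitting), combines it with \eqref{E:discounted_deficit} at $x=a$ to obtain the pure-parameter $\Pi$-identity \eqref{sigmais0_step}, and then switches the convolution kernel from $W^{(p)}$ to $W^{(p+q)}$; the paper does this kernel switch by taking Laplace transforms in $b$ and inverting (Step 2 of the lemma), though once \eqref{sigmais0_step} is in hand it can also be done by Fubini and \eqref{fullconvolutions}. So your plan is completable, but as written the ``collapsing'' step is unsupported, and it is precisely the step where the paper had to build new machinery.

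Two further cautions on the unbounded variation case. First, the creeping term $\tfrac{\sigma^2}{2}W^{(p)\prime}$ never has to be tracked: the paper performs the analytic computation only for $\sigma=0$ (where \eqref{E:discounted_deficit} has no creeping term) and obtains the general case by approximating $X$ with bounded variation processes $X^n$, using almost sure uniform convergence on compact time intervals, convergence of the scale functions $W^{(p)}_n\to W^{(p)}$ (continuity theorem for Laplace transforms, upgraded to local uniform convergence via concavity of $\log W^{(p)}_n$), and dominated convergence; trying instead to push the identity through directly with the creeping term would require differentiability of $\wapq$ and new identities for $W^{(p+q)\prime}$. Second, your alternative ``smooth-fit/continuity argument at $b$'' for pinning down $f(b)$ when $\pscale(0)=0$ is not substantiated --- evaluation at $x=b$ then yields $0=0$, and matching one-sided derivatives would need regularity you have not established --- so the approximation route (your fallback, and the paper's actual argument, with $p=0$ handled by letting $p\downarrow 0$) is the one to use.
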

Note that the two theorems generalise \eqref{E:exitabove} and \eqref{E:exitbelow}. From these two theorems we can derive  the following list of corollaries.

\begin{corollary}\label{corol1}
\begin{itemize}
 \item[(i)]

For $0\leq a \leq b$ and $p,q,x\geq 0$,
\begin{multline*}
\e_x \left[ \mathrm{e}^{- p \tau_0^- - q \int_0^{\tau_0^-} \ind_{(a,b)} (X_s) \mathrm{d}s } ; \tau_0^- < \infty \right] =
\zapq(x)    - q\int_b^x W^{(p)}(x-z)\zapq(z)\mathrm dz \\  - \frac{ \frac{p}{\Phi(p)} + q \int_a^b \mathrm e^{-\Phi(p)y} \zapq(y)\mathrm dy }{ 1  + q \int_a^b \mathrm e^{-\Phi(p)y} \wapq(y)\mathrm dy } \left( \wapq(x) - q\int_b^x W^{(p)}(x-z)\wapq(z)\mathrm dz \right) ,
\end{multline*}
where $\lim_{p \to 0} p/\Phi(p) = \psi'(0+) \vee 0$ in the case $p=0$.
\item[(ii)] For $a,p,q,x\geq 0$,
\begin{multline*}
\e_x \left[ \mathrm{e}^{- p \tau_0^- - q \int_0^{\tau_0^-} \ind_{(a,\infty)} (X_s) \mathrm{d}s } ; \tau_0^- < \infty \right] \\ =
\zapq(x)       - \frac{ \frac{p+q}{\Phi(p+q)} - q\int_0^a \mathrm e^{-\Phi(p+q)y} Z^{(p)}(y)\mathrm dy }{ 1 - q\int_0^a \mathrm e^{-\Phi(p+q)y} W^{(p)}(y)\mathrm dy  }  \wapq(x).
\end{multline*}

\end{itemize}
\end{corollary}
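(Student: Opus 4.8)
The plan is to derive both parts of Corollary \ref{corol1} from Theorem \ref{T:stopbelow_double} by letting $c\to\infty$, so that the real work is a careful asymptotic analysis of the $c$-dependent factors on the right-hand side of that theorem.

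First the left-hand sides, which are easy. For part (i), with $a,b$ fixed, on the event $\{\tau_0^-<\tau_c^+\}$ the exponent $-p\tau_0^--q\int_0^{\tau_0^-}\ind_{(a,b)}(X_s)\dd s$ does not involve $c$; since $\{\tau_0^-<\tau_c^+\}\uparrow\{\tau_0^-<\infty\}$ and the integrand is bounded by $1$, monotone convergence produces the left-hand side of (i). For part (ii) I would take the admissible boundary case $b=c$ in Theorem \ref{T:stopbelow_double}, for which every $\int_b^x$- and $\int_b^c$-integral vanishes (as $W^{(p)}(x-z)=0$ for $z>x$), leaving $\zapq(x)-\frac{\zapq(c)}{\wapq(c)}\wapq(x)$ on the right. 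On $\{\tau_0^-<\tau_c^+\}$ one has $X_s\le c$ for $s<\tau_0^-$, so $\ind_{(a,c)}(X_s)=\ind_{(a,\infty)}(X_s)$ there and the exponent is again $c$-independent; monotone convergence yields the left-hand side of (ii).

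The hard part is the limit of the ratio
\[
R(c)=\frac{\zapq(c) - q\int_b^c W^{(p)}(c-z)\zapq(z)\dd z}{\wapq(c) - q\int_b^c W^{(p)}(c-z)\wapq(z)\dd z}
\]
for part (i), and of $\zapq(c)/\wapq(c)$ for part (ii). The obstacle is that $\wapq(c)$ and $\zapq(c)$ individually grow like $\ee^{\Phi(p+q)c}$, whereas the numerator and denominator of $R(c)$ grow only like $\ee^{\Phi(p)c}$, so a cancellation must be exposed. To do this I would first establish the reduction identity
\[
\wapq(c) - q\int_0^c W^{(p)}(c-z)\wapq(z)\dd z = W^{(p)}(c) - q\int_0^a W^{(p)}(c-y)\pscale(y)\dd y,
\]
together with its verbatim analogue obtained by replacing $(\wapq,\pscale,W)$ with $(\zapq,\zp,Z)$. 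Each follows by inserting the first representation in \eqref{convequiv2}, applying the convolution identity $q\int_0^c W^{(p)}(c-z)W^{(p+q)}(z)\dd z=W^{(p+q)}(c)-W^{(p)}(c)$ and its $Z$-version (both special cases of \eqref{fullconvolutions}), and swapping the order of integration by Fubini; the $W^{(p+q)}$- and $Z^{(p+q)}$-terms then cancel, lowering the growth rate from $\Phi(p+q)$ to $\Phi(p)$.

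Restoring the lower limit via $\int_b^c=\int_0^c-\int_0^b$ and multiplying by $\ee^{-\Phi(p)c}$, I would pass to the limit using the standard asymptotics $\ee^{-\Phi(p)c}W^{(p)}(c)\to 1/\psi'(\Phi(p))$ and $\ee^{-\Phi(p)c}Z^{(p)}(c)\to p/(\Phi(p)\psi'(\Phi(p)))$ together with dominated convergence on the compact range $[0,b]$. This makes $\ee^{-\Phi(p)c}$ times the denominator of $R(c)$ tend to $\psi'(\Phi(p))^{-1}\bigl[1-q\int_0^a \ee^{-\Phi(p)y}\pscale(y)\dd y+q\int_0^b \ee^{-\Phi(p)z}\wapq(z)\dd z\bigr]$, and likewise for the numerator. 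Finally I would invoke $\wapq(y)=\pscale(y)$ and $\zapq(y)=\zp(y)$ for $y\le a$ (immediate from the second representation in \eqref{convequiv2}, whose integral is then empty) to merge the two integrals into $q\int_a^b \ee^{-\Phi(p)y}\wapq(y)\dd y$, giving exactly the ratio in (i). For part (ii) the same asymptotics applied directly to $\wapq(c)$ and $\zapq(c)$, now at the faster rate $\Phi(p+q)$, yield $\lim_{c\to\infty}\zapq(c)/\wapq(c)$ equal to the stated quotient. The degenerate case $p=0$ with $\psi'(0+)\ge 0$, where $\Phi(p)=0$, I would treat by continuity, letting $p\downarrow 0$ in the formula already proved for $p>0$ and using $\lim_{p\to 0}p/\Phi(p)=\psi'(0+)\vee 0$.
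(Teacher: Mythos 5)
Your proposal is correct and follows essentially the same route as the paper: let $c\to\infty$ in Theorem \ref{T:stopbelow_double} (taking $b=c$ for part (ii)), expose the cancellation that lowers the growth of numerator and denominator from rate $\Phi(p+q)$ to rate $\Phi(p)$, and conclude via the asymptotics $W^{(p)}(c-z)/W^{(p)}(c)\to\ee^{-\Phi(p)z}$ and $Z^{(p)}(c)/W^{(p)}(c)\to p/\Phi(p)$ coming from \eqref{scaledecomp}, dominated convergence, and a limiting argument in $p$ for the degenerate case. The only (harmless) deviation is that your reduction identity is precisely the paper's \eqref{convequiv1}, rewritten with lower integration limit $0$ by using $\wapq=W^{(p)}$ and $\zapq=Z^{(p)}$ on $[0,a]$, which you re-derive analytically from \eqref{fullconvolutions} and Fubini, whereas the paper establishes it probabilistically by comparing Theorems \ref{T:stopabove} and \ref{T:stopbelow_double} at $x=a=b$ with \eqref{E:exitabove} and \eqref{E:exitbelow}.
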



\begin{corollary}\label{corol2}
\begin{itemize}
 \item[(i)]

For  $-\infty<a \leq b\leq c$, $p,q\geq 0$ and $x\leq c$,
\begin{equation*}
\e_x \left[ \mathrm{e}^{- p \tau_c^+ - q \int_0^{\tau_c^+} \ind_{(a,b)} (X_s) \mathrm{d}s } ;  \tau_c^+<\infty \right] =
\frac{   \mathcal H^{(p,q)}(x-a) - q\int_{b}^{x} W^{(p)}(x-y) \mathcal H^{(p,q)}(y-a) \mathrm dy }{  \mathcal H^{(p,q)}(c-a) - q\int_{b}^{c} W^{(p)}(c-y) \mathcal H^{(p,q)}(y-a)\mathrm dy }.
\end{equation*}


\item[(ii)] For  $b\leq c$, $p,q\geq 0$ and $x\leq c$,
\begin{equation*}
\e_x \left[ \mathrm{e}^{- p \tau_c^+ - q \int_0^{\tau_c^+} \ind_{(-\infty,b)} (X_s) \mathrm{d}s } ;  \tau_c^+<\infty \right] =
\frac{ \mathcal H^{(p+q,-q)}(x-b) }{ \mathcal H^{(p+q,-q)}(c-b)}.
\end{equation*}
\end{itemize}
\end{corollary}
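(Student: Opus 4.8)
The plan is to obtain the identity as the limiting case $a\to-\infty$ of Corollary~\ref{corol2}(i). First I would check that the left-hand side converges correctly: as $a\downarrow-\infty$ we have $\ind_{(a,b)}(X_s)\uparrow\ind_{(-\infty,b)}(X_s)$ pointwise, so on $\{\tau_c^+<\infty\}$ the integrand $\mathrm e^{-p\tau_c^+-q\int_0^{\tau_c^+}\ind_{(a,b)}(X_s)\mathrm ds}$ decreases to $\mathrm e^{-p\tau_c^+-q\int_0^{\tau_c^+}\ind_{(-\infty,b)}(X_s)\mathrm ds}$. Since everything is bounded by $1$, dominated convergence gives
\[
\e_x\!\left[\mathrm e^{-p\tau_c^+-q\int_0^{\tau_c^+}\ind_{(a,b)}(X_s)\mathrm ds};\tau_c^+<\infty\right]\;\longrightarrow\;\e_x\!\left[\mathrm e^{-p\tau_c^+-q\int_0^{\tau_c^+}\ind_{(-\infty,b)}(X_s)\mathrm ds};\tau_c^+<\infty\right].
\]
It then remains to identify the limit of the right-hand side ratio in Corollary~\ref{corol2}(i).

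The key step is the large-argument asymptotics of $\mathcal H^{(p,q)}$. Using the classical estimate $W^{(p+q)}(y)\sim \mathrm e^{\Phi(p+q)y}/\psi'(\Phi(p+q))$ as $y\to\infty$ together with the definition of $\mathcal H^{(p,q)}$, and using that $\Phi(p+q)>\Phi(p)$ when $q>0$, one finds a constant $C>0$ (independent of $x,b,c$) with $\mathcal H^{(p,q)}(t)\sim C\,\mathrm e^{\Phi(p+q)t}$ as $t\to\infty$. I would then divide both numerator and denominator of the ratio by $\mathcal H^{(p,q)}(-a)$ (which cancels) and let $a\to-\infty$. Termwise $\mathcal H^{(p,q)}(x-a)/\mathcal H^{(p,q)}(-a)\to\mathrm e^{\Phi(p+q)x}$, and in the integrals $\mathcal H^{(p,q)}(y-a)/\mathcal H^{(p,q)}(-a)\to\mathrm e^{\Phi(p+q)y}$ for each fixed $y$; since $y$ ranges over the compact interval between $b$ and $x$ and $\mathcal H^{(p,q)}$ is nondecreasing, these ratios are dominated and the limit passes inside the integral. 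Setting $N(x):=\mathrm e^{\Phi(p+q)x}-q\int_b^x W^{(p)}(x-y)\mathrm e^{\Phi(p+q)y}\mathrm dy$, the right-hand side thus tends to $N(x)/N(c)$.

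Finally I would identify $N$ with $\mathcal H^{(p+q,-q)}$. Substituting $u=x-y$ gives
\[
\int_b^x W^{(p)}(x-y)\mathrm e^{\Phi(p+q)y}\mathrm dy=\mathrm e^{\Phi(p+q)x}\int_0^{x-b}\mathrm e^{-\Phi(p+q)u}W^{(p)}(u)\mathrm du,
\]
so that, recalling $\mathcal H^{(p+q,-q)}(s)=\mathrm e^{\Phi(p+q)s}\bigl(1-q\int_0^s\mathrm e^{-\Phi(p+q)u}W^{(p)}(u)\mathrm du\bigr)$ (here the parameters satisfy $p'+q'=p$ and $\Phi(p')=\Phi(p+q)$), we obtain $N(x)=\mathrm e^{\Phi(p+q)b}\,\mathcal H^{(p+q,-q)}(x-b)$. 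The prefactor $\mathrm e^{\Phi(p+q)b}$ cancels in $N(x)/N(c)$, yielding exactly $\mathcal H^{(p+q,-q)}(x-b)/\mathcal H^{(p+q,-q)}(c-b)$. The degenerate case $q=0$ is treated separately but is trivial, as the occupation term then vanishes and both sides reduce to the classical $\mathrm e^{-\Phi(p)(c-x)}$.

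I expect the main obstacle to be the rigorous passage to the limit on the right-hand side: one must make the asymptotics $\mathcal H^{(p,q)}(t)\sim C\,\mathrm e^{\Phi(p+q)t}$ precise and, in particular, justify exchanging the limit $a\to-\infty$ with the integration over the interval between $b$ and $x$ (and between $b$ and $c$), which requires a uniform domination of the normalized ratios $\mathcal H^{(p,q)}(y-a)/\mathcal H^{(p,q)}(-a)$ on the relevant compact set. The remaining work is the routine substitution identifying $N$ with $\mathcal H^{(p+q,-q)}$.
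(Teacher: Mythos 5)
Your proposal establishes only half of the statement. Corollary \ref{corol2} comprises parts (i) and (ii); you take (i) as given and derive (ii) from it, so part (i) — which is itself a nontrivial claim — is left unproven. In the paper, (i) is obtained from Theorem \ref{T:stopabove} by writing
$\e_x \left[ \cdots ; \tau_c^+<\infty \right] = \lim_{m\to\infty} \e_x \left[ \cdots ; \tau_c^+<\tau_{-m}^- \right]$,
shifting space by $m$ so that Theorem \ref{T:stopabove} applies with interval $(a+m,b+m)$ and barriers $0$ and $c+m$, and then computing
$\lim_{m\to\infty} \mathcal W_{a+m}^{(p,q)}(x+m)/W^{(p)}(m) = \mathrm e^{\Phi(p)a}\mathcal H^{(p,q)}(x-a)$
via \eqref{scaledecomp} and dominated convergence. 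Without some such argument (or an independent proof of (i)), your proposal is incomplete as a proof of the stated corollary.

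That said, your derivation of (ii) from (i) is correct, and it follows a genuinely different route from the paper's. The paper does not deduce (ii) from (i): it returns to Theorem \ref{T:stopabove} with interval $(-m,b)$ and lower barrier $-m$, shifts by $m$ so that the shifted interval starts at $0$ (whence $\mathcal W_{0}^{(p,q)} = W^{(p+q)}$ and no incomplete convolution appears), and passes to the limit — thereby avoiding any asymptotics for $\mathcal H^{(p,q)}$. Your route instead keeps the lower barrier at $-\infty$ throughout and sends only the interval endpoint $a$ to $-\infty$: the monotone/dominated convergence on the left-hand side is fine; the asymptotics $\mathcal H^{(p,q)}(t)\sim C\,\mathrm e^{\Phi(p+q)t}$ hold with $C = q/\bigl((\Phi(p+q)-\Phi(p))\psi'(\Phi(p+q))\bigr)>0$ for $q>0$, as follows from \eqref{scaledecomp} and $W_{\Phi(p+q)}(\infty)=1/\psi'(\Phi(p+q))$; the domination of the normalized ratios on compact $y$-ranges by monotonicity of $\mathcal H^{(p,q)}$ is valid; and the substitution identifying $N(x)=\mathrm e^{\Phi(p+q)b}\,\mathcal H^{(p+q,-q)}(x-b)$ is exactly right, with the prefactor cancelling in the ratio. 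The cost of your route is the extra analytic ingredient (the growth rate of $\mathcal H^{(p,q)}$); the benefit is that it exhibits (ii) as a clean boundary case of (i). Supply a proof of (i) — for instance the paper's shift-and-limit argument from Theorem \ref{T:stopabove} — and your treatment of (ii) then stands.
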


\begin{corollary}\label{corol3}
\begin{itemize}
 \item[(i)]
Assume $\psi'(0+)>0$. Then for $-\infty<a\leq b$, $q\geq 0$ and $x\in\mathbb R$,
\begin{equation*}
\e_x \left[ \mathrm{e}^{  - q \int_0^{\infty} \ind_{(a,b)} (X_s) \mathrm{d}s }  \right] =
\frac{ Z^{(q)}(x-a) - q\int_{b}^{x} W(x-y) Z^{(q)}(y-a) \mathrm dy }{ 1+ \frac{q}{\psi'(0+)} \int_0^{b-a} Z^{(q)}(y)\mathrm dy }.
\end{equation*}

\item[(ii)]
Assume $\psi'(0+)>0$. Then for  $q\geq 0$ and $b,x\in\mathbb R$,
\begin{equation*}
\e_x \left[ \mathrm{e}^{  - q \int_0^{\infty} \ind_{(-\infty,b)} (X_s) \mathrm{d}s }  \right] =
\frac{\psi'(0+) \Phi(q)}{q} \mathcal H^{(q,-q)}(x-b).
\end{equation*}

 \item[(iii)]
Assume $\psi'(0+)<0$. Then for $-\infty<a\leq b$, $q\geq 0$ and $x\in\mathbb R$,
\begin{multline*}
\e_x \left[ \mathrm{e}^{  - q \int_0^{\infty} \ind_{(a,b)} (X_s) \mathrm{d}s }  \right] =
Z^{(q)}(x-a)  - q\int_{b}^{x} W^{}(x-y)Z^{(q)}(y-a) \mathrm dy \\
 -   \frac{   q \int_{0}^{b-a} \mathrm e^{-\Phi(0)y} Z^{(q)}(y) \mathrm dy }{\psi'(\Phi(0))   + q \int_{0}^{b-a} \mathrm e^{-\Phi(0)y} \mathcal H^{(0,q)}(y)\mathrm dy }   \left( \mathcal H^{(0,q)}(x-a) - q\int_{b}^{x} W^{}(x-y) \mathcal H^{(0,q)}(y-a)\mathrm dy \right).
\end{multline*}

\item[(iv)]
Assume $\psi'(0+)<0$. Then for  $q\geq 0$ and $a,x\in\mathbb R$,
\begin{equation*}
\e_x \left[ \mathrm{e}^{  - q \int_0^{\infty} \ind_{(a,\infty)} (X_s) \mathrm{d}s }  \right] =
Z^{(q)}(x-a)  - \frac{\Phi(q)-\Phi(0) }{ \Phi(q) }  \mathcal H^{(0,q)}(x-a). 
\end{equation*}
\end{itemize}

\end{corollary}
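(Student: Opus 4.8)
The plan is to derive part (iv) from part (iii) by sending $b\to\infty$. First I would fix $x,a$ and restrict to $b>x$ in the formula of part (iii). Since $W$ vanishes on $(-\infty,0)$, every integral of the form $\int_b^x W(x-y)(\cdots)\,\dd y$ vanishes once $b>x$, so the right-hand side of part (iii) collapses to
\begin{equation*}
\zscale(x-a) - \frac{q\int_0^{b-a}\ee^{-\Phi(0)y}\zscale(y)\,\dd y}{\psi'(\Phi(0)) + q\int_0^{b-a}\ee^{-\Phi(0)y}\mathcal H^{(0,q)}(y)\,\dd y}\,\mathcal H^{(0,q)}(x-a).
\end{equation*}
On the left-hand side, $\ind_{(a,b)}(X_s)\uparrow\ind_{(a,\infty)}(X_s)$ as $b\uparrow\infty$, so the occupation time $\int_0^\infty\ind_{(a,b)}(X_s)\,\dd s$ increases to $\int_0^\infty\ind_{(a,\infty)}(X_s)\,\dd s$ and, by bounded convergence (the integrands lie in $[0,1]$), $\e_x\!\left[\ee^{-q\int_0^\infty\ind_{(a,b)}(X_s)\,\dd s}\right]\to\e_x\!\left[\ee^{-q\int_0^\infty\ind_{(a,\infty)}(X_s)\,\dd s}\right]$. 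It then remains only to identify the limit of the quotient as $b\to\infty$.

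For $q=0$ the identity is trivial, both sides being $1$, so I would assume $q>0$. Because $\psi'(0+)<0$ we have $\Phi(0)>0$, and the strict monotonicity of $\Phi$ gives $\Phi(q)>\Phi(0)$. The asymptotics $W^{(q)}(y)\sim\ee^{\Phi(q)y}/\psi'(\Phi(q))$ as $y\to\infty$ then show that both $\ee^{-\Phi(0)y}\zscale(y)$ and $\ee^{-\Phi(0)y}\mathcal H^{(0,q)}(y)$ grow like $\ee^{(\Phi(q)-\Phi(0))y}$, so the numerator and the denominator of the quotient both diverge. Applying L'Hôpital's rule in $b$ (differentiating each integral with respect to its upper limit) reduces the limit to $\lim_{u\to\infty}\zscale(u)/\mathcal H^{(0,q)}(u)$. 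Using $\zscale(u)\sim q\,\ee^{\Phi(q)u}/(\Phi(q)\psi'(\Phi(q)))$ and $\mathcal H^{(0,q)}(u)\sim q\,\ee^{\Phi(q)u}/((\Phi(q)-\Phi(0))\psi'(\Phi(q)))$, this limit equals $(\Phi(q)-\Phi(0))/\Phi(q)$, exactly the coefficient in part (iv). Combining with the convergence of the left-hand side gives the result.

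The main obstacle is the evaluation of this $\infty/\infty$ limit: everything hinges on the sharp exponential growth rates of $\zscale$ and $\mathcal H^{(0,q)}$, which follow from the asymptotics of $W^{(q)}$ together with the integral representation of $\mathcal H^{(0,q)}$. By contrast, the vanishing of the $\int_b^x$-terms and the interchange of limit and expectation on the left are routine.
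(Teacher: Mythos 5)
Your proposal covers only one of the four parts of the statement. Corollary \ref{corol3} consists of parts (i)--(iv), and your argument establishes part (iv) alone, and even that only conditionally on part (iii): parts (i), (ii) and (iii) are nowhere proved. In the paper these are obtained by removing the finite horizon from the earlier results --- (i) and (ii) follow from Corollary \ref{corol2}(i)--(ii) with $p=0$ by letting $c\to\infty$, and (iii) follows from Corollary \ref{corol1}(i) with $p=0$ by spatial homogeneity and letting the lower barrier $-m\to-\infty$ --- and that is where most of the analytic work (dominated convergence against ratios of scale functions, the limit evaluations via \eqref{scaledecomp} and l'H\^opital's rule) is located. As a proof of the corollary as stated, the proposal therefore has a genuine gap: (i)--(iii) cannot simply be cited, since they are part of the very statement under examination.

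That said, the step you do give is correct, and it takes a genuinely different route from the paper for part (iv). The paper does not derive (iv) from (iii); it goes back to Corollary \ref{corol1}(ii) with $p=0$, applies spatial homogeneity, and evaluates an $m\to\infty$ limit involving $\mathcal W_{a+m}^{(0,q)}(x+m)$, again via \eqref{scaledecomp} and l'H\^opital. Your reduction instead sends $b\to\infty$ in (iii): the $\int_b^x$ terms vanish once $b>x$ because $W$ vanishes on $(-\infty,0)$; the left-hand side converges by monotone plus bounded convergence; and for $q>0$ both integrals in the coefficient diverge (since $\Phi(q)>\Phi(0)>0$), so l'H\^opital reduces the quotient to $\lim_{u\to\infty} Z^{(q)}(u)/\mathcal H^{(0,q)}(u)=(\Phi(q)-\Phi(0))/\Phi(q)$, using the asymptotics $Z^{(q)}(u)\sim q\mathrm e^{\Phi(q)u}/\bigl(\Phi(q)\psi'(\Phi(q))\bigr)$ and $\mathcal H^{(0,q)}(u)\sim q\mathrm e^{\Phi(q)u}/\bigl((\Phi(q)-\Phi(0))\psi'(\Phi(q))\bigr)$, both of which follow from \eqref{scaledecomp} and $W_{\Phi(q)}(\infty)=1/\psi'(\Phi(q))$. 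All of these claims check out, and since the paper's proof of (iii) makes no use of (iv), your reduction is not circular; it is a valid and arguably cleaner alternative to the paper's independent derivation of (iv), in that it avoids repeating the shift argument. But to prove the corollary you must still supply proofs of (i), (ii) and (iii), which constitute the bulk of the work.
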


We remark that Corollary \ref{corol3}(ii) was derived earlier in \cite{landriaultetal2011}*{Corollary 1}.
Note that regarding Corollary \ref{corol3}, due to the long-term behaviour of $X$, if $\psi'(0+)\leq0$, then $\int_0^{\infty} \ind_{(-\infty,b)} (X_s) \mathrm{d}s=\infty$ a.s., if $\psi'(0+)\geq0$, then $\int_0^{\infty} \ind_{(a,\infty)} (X_s) \mathrm{d}s=\infty$ a.s. and if $\psi'(0+)=0$, then $\int_0^{\infty} \ind_{(a,b)} (X_s) \mathrm{d}s=\infty$ a.s..


We also mention the following useful identities,
\begin{equation}\label{convequiv1}
\begin{split}
 \wapq(x) - q\int_b^x W^{(p)}(x-z)\wapq(z)\mathrm dz
= &  W^{(p)}(x) + q\int_a^b W^{(p)}(x-z)\wapq(z)\mathrm dz, \\
 \zapq(x) - q\int_b^x W^{(p)}(x-z)\zapq(z)\mathrm dz
= &  Z^{(p)}(x) + q\int_a^b W^{(p)}(x-z)\zapq(z)\mathrm dz.
\end{split}
\end{equation}
These two identities can be proved easily by setting first $x=a=b$ in Theorem \ref{T:stopabove} and comparing with \eqref{E:exitabove} and then  setting  $x=a=b$ in Theorem \ref{T:stopbelow_double} and comparing with \eqref{E:exitbelow}. Similarly,
\begin{equation}\label{identH}
 \mathcal H^{(p,q)}(x-a) - q\int_{b}^{x} W^{(p)}(x-y) \mathcal H^{(p,q)}(y-a) \mathrm dy
= \mathrm e^{\Phi(p)(x-a)} + q\int_a^b W^{(p)}(x-y)\mathcal H^{(p,q)}_{a}(y-a) \mathrm dy,
\end{equation}
which can be proved easily by setting $x=a=b$ in Corollary \ref{corol2}(i) and comparing with  the identity in Corollary \ref{corol2}(i) for $q=0$. Note that \eqref{convequiv1} and \eqref{identH} lead to alternative identities for the main theorems and corollaries. Further, \eqref{convequiv1} and \eqref{identH} will also be used to prove Corollary \ref{corol1}(i) and Corollary \ref{corol3}(i) respectively.

\begin{remark}
\rm
The expressions appearing in  Theorems \ref{T:stopbelow_double} and \ref{T:stopabove} and Corollaries \ref{corol1}-\ref{corol3} are all given in terms of scale functions for which in general only the Laplace transform is known. However, there are plenty of examples of spectrally negative L\'evy processes for which an explicit formula  (though the degree of explicitness can vary case by case) exists for the scale function $W^{(q)}$, cf. \cite{kuznetsovetal2011}. For these examples one should then be able to get a more explicit expression for the functionals appearing in the aforementioned theorems and corollaries. On the other hand, there are good numerical methods for dealing with Laplace inversion  of the scale function (cf. \cite{kuznetsovetal2011}*{Section 5}) and these can be used to numerically evaluate the expressions in Theorems \ref{T:stopbelow_double} and \ref{T:stopabove} and Corollaries \ref{corol1}-\ref{corol3}. Although the Laplace transforms of $Z^{(q)}$ and $\mathcal H^{(p,q)}$ are known and thus these functions can be computed via a single Laplace inversion, this is not true in general for the functions $\wapq$  and $\zapq$ due to the appearance of incomplete convolutions. This also means that several of our indentities cannot be computed via a single Laplace inversion and more complicated numerical procedures involving Laplace inversion and computation of iterated integrals are needed.
\end{remark}

Our results improve the results from \cite{landriaultetal2011} and \cite{kyprianouetal2012} (in the no refraction case) in several ways. First, we consider occupation times of an arbitrary interval, not just intervals of the form $(-\infty,b)$. Second, we deal with  the case $p>0$. Third, we deal with a general starting point $x$; note that the expressions simplify when $x\leq b$ or $x\leq a$. Finally, our expressions are considerably simpler than the ones derived in \cite{landriaultetal2011} and \cite{kyprianouetal2012}. To illustrate this consider Corollary \ref{corol1}(i) with $p=0$, $a=0$ and $x=b$. Then
\begin{equation*}
\e_b \left[ \mathrm{e}^{- q \int_0^{\tau_0^-} \ind_{(0,b)} (X_s) \mathrm{d}s } ; \tau_0^- < \infty \right] =
Z^{(q)}(b)     - \frac{ \left( \psi'(0+)\vee 0 \right) + q \int_0^b \mathrm e^{-\Phi(0)y} Z^{(q)}(y)\mathrm dy }{ 1  + q \int_0^b \mathrm e^{-\Phi(0)y} W^{(q)}(y)\mathrm dy }  W^{(q)}(b),
\end{equation*}
which is a more compact expression and easier to evaluate than the one in Theorem 2 of \cite{landriaultetal2011} and Corollary 1(ii) of \cite{kyprianouetal2012} (in the no refraction case).

\bigskip

The rest of the paper is organized as follows. The main lemma needed for the proofs, which is based on some of the techniques used in \cite{kyploeffen},  is given in the next section. It is this lemma which allows us in the end to simplify the expressions obtained in  \cite{landriaultetal2011} and \cite{kyprianouetal2012}. Then in Sections 3-5 the proofs of the theorems and corollaries are given. The arguments used in Sections 3 and 4 (at least for the case where $X$ has paths of bounded variation) are similar to the ones  in  \cite{landriaultetal2011}.   Finally, in Section 6 we give two applications of our results.


\section{Main lemma}\label{S:techlemmas}


Recall that $X$ is a spectrally negative L\'evy process with L\'evy triplet $(\gamma,\sigma,\Pi)$.
For some particular functions $f$ associated with $X$, the right hand side of \eqref{E:discounted_deficit} can be written in a much nicer form (namely, \eqref{mgproperty} below) and this observation is the starting point of what leads in the end to the simple form, compared to the earlier works \cite{landriaultetal2011} and \cite{kyprianouetal2012},  of the  identities in the main theorems.

For a positive, measurable function $v^{(q)}(x)$, $x\in(-\infty,\infty)$, consider the following condition:
\begin{equation}\label{mgproperty}
\mathbb E_x \left[ \mathrm e^{-q\tau_a^-} v^{(q)}(X_{\tau_a^-}) \mathbf{1}_{\{\tau_a^-<\tau_b^+\}} \right]
= v^{(q)}(x)-\frac{W^{(q)}(x-a)}{W^{(q)}(b-a)} v^{(q)}(b), \quad 0\leq a\leq x\leq b.
\end{equation}
\begin{remark}
\rm Note that \eqref{mgproperty} implies via the Markov property, \eqref{E:exitabove} and the lack of upward jumps that the process
\begin{equation*}
t\mapsto \mathrm e^{-q(t\wedge\tau_a^-\wedge\tau_b^+)} v^{(q)} \left( X_{t\wedge\tau_a^-\wedge\tau_b^+} \right),
\end{equation*}
is a $\mathbb P_x$-martingale for all $x\in[a,b]$. Conversely, if the above displayed process is a $\mathbb P_x$-martingale for $x\in[a,b]$, then by taking expectations and the limit as $t\to\infty$, one can show that \eqref{mgproperty} is satisfied provided $v^{(q)}$ is sufficiently regular so that switching of the expectation and the limit is justified.
\end{remark}
For $q,a\geq0$, we define $\mathcal V^{(q)}_a$ to be the function space consisting of  functions $v^{(q)}(x)$ that satisfy \eqref{mgproperty}  for all $x$ and $b$ such that $a\leq x\leq b$.
We will now show that several type of functions lie in $\mathcal V_a^{(q)}$. Consider first the scale function $W^{(q)}(x)$. We have
for all  $0\leq a\leq x\leq b$ by the strong Markov property and \eqref{E:exitabove},
\begin{equation*}
\begin{split}
 \frac{ W^{(q)}(x) }{ W^{(q)}(b)} = &  \mathbb E_x \left[ \mathrm e^{-q\tau_b^+} \mathbf{1}_{\{\tau_b^+<\tau_0^-\}} \right] \\
= &  \mathbb E_x \left[ \mathrm e^{-q\tau_b^+} \mathbf{1}_{\{\tau_b^+<\tau_a^-\}} \right] + \mathbb E_x \left. \left[ \mathbb E_x \left[ \mathrm e^{-q\tau_b^+} \mathbf{1}_{\{\tau_a^-<\tau_b^+<\tau_0^-\}} \right| \mathcal F_{\tau_a^-} \right] \right] \\
= &  \frac{W^{(q)}(x-a)}{W^{(q)}(b-a)} + \mathbb E_x \left[ \mathrm e^{-q\tau_a^-} \mathbb E_{X_{\tau_a^-}} \left[ \mathrm e^{-q\tau_b^+} \mathbf{1}_{\{\tau_b^+<\tau_0^-\}}  \right] \mathbf{1}_{\{\tau_a^-<\tau_b^+\}} \right] \\
= &  \frac{W^{(q)}(x-a)}{W^{(q)}(b-a)} + \mathbb E_x \left[ \mathrm e^{-q\tau_a^-} \frac{ W^{(q)}(X_{\tau_a^-}) }{ W^{(q)}(b) } \mathbf{1}_{\{\tau_a^-<\tau_b^+\}} \right],
\end{split}
\end{equation*}
from which it follows that $W^{(q)}$ satisfies \eqref{mgproperty} and thus $W^{(q)} \in \mathcal V^{(q)}_a$ for all $q,a\geq0$. By spatial homogeneity it then follows that $x\mapsto W^{(q)}(x-y)$ lies in $\mathcal V^{(q)}_a$ for all $q\geq0$ and $0\leq y\leq a$. Let now
\begin{equation}\label{formvq}
 v^{(q)}(x)=\mathbb E_x \left[ \mathrm e^{-q\tau_0^-} f(X_{\tau_0^-}) \mathbf{1}_{\{\tau_0^-<\infty\}} \right], \quad x\in\mathbb R,
\end{equation}
for some    measurable function $f$ such that $|v^{(q)}(x)|<\infty$. Note that  $v^{(q)}(x)=f(x)$ for $x<0$. Then we have for $0\leq a\leq x$ by using the strong Markov property,
\begin{equation}\label{hulp1}
 \begin{split}
  v^{(q)}(x) = \mathbb E_x \left[ \left. \mathbb E_x \left[ \mathrm e^{-q\tau_0^-} f(X_{\tau_0^-})\mathbf{1}_{\{\tau_0^-<\infty\}} \right| \mathcal F_{\tau_a^-} \right] \right]
=  \mathbb E_x \left[ \mathrm e^{-q\tau_a^-} v^{(q)}(X_{\tau_a^-}) \mathbf{1}_{\{\tau_a^-<\infty\}} \right]
 \end{split}
\end{equation}
and therefore again by the strong Markov property and \eqref{E:exitabove}, we have for all $0\leq a\leq x\leq b$,
\begin{equation*}
 \begin{split}
  v^{(q)}(x) = &  \mathbb E_x \left[ \mathrm e^{-q\tau_a^-} v^{(q)}(X_{\tau_a^-}) \mathbf{1}_{\{\tau_a^-<\infty\}} \right] \\
= &  \mathbb E_x \left[ \mathrm e^{-q\tau_a^-} v^{(q)}(X_{\tau_a^-}) \mathbf{1}_{\{\tau_a^-<\tau_b^+\}} \right] + \mathbb E_x \left[ \left. \mathbb E_x \left[ \mathrm e^{-q\tau_a^-} v^{(q)}(X_{\tau_a^-}) \mathbf{1}_{\{\tau_b^+ < \tau_a^-<\infty\}} \right| \mathcal F_{\tau_b^+} \right] \right] \\
= &  \mathbb E_x \left[ \mathrm e^{-q\tau_a^-} v^{(q)}(X_{\tau_a^-}) \mathbf{1}_{\{\tau_a^-<\tau_b^+\}} \right] +  \mathbb E_x \left[ \mathrm e^{-q\tau_b^+} \mathbf{1}_{\{\tau_b^+<\tau_a^-\}} \right]   \mathbb E_b \left[ \mathrm e^{-q\tau_a^-} v^{(q)}(X_{\tau_a^-}) \mathbf{1}_{\{\tau_a^-<\infty\}} \right] \\
= &  \mathbb E_x \left[ \mathrm e^{-q\tau_a^-} v^{(q)}(X_{\tau_a^-}) \mathbf{1}_{\{\tau_a^-<\tau_b^+\}} \right] +  \frac{W^{(q)}(x-a)}{W^{(q)}(b-a)}   \mathbb E_b \left[ \mathrm e^{-q\tau_a^-} v^{(q)}(X_{\tau_a^-}) \mathbf{1}_{\{\tau_a^-<\infty\}} \right].
 \end{split}
\end{equation*}
Now using \eqref{hulp1} for $x=b$ for the last term on the right hand side of the previous computation, we see that $v^{(q)}(\cdot)$ of the form \eqref{formvq} satisfies \eqref{mgproperty}. In particular, for $f\equiv1$, $v^{(q)}(\cdot)$ of the form  \eqref{formvq}  lies in $\mathcal V^{(q)}_a$ for all $q,a\geq0$.
As $\mathcal V^{(q)}_a$ is a linear space it follows via \eqref{E:exitbelow} that we also have for all $q,a\geq0$,
\begin{equation*}
 Z^{(q)}(x)= \frac q{\Phi(q)} W^{(q)}(x) + \mathbb E_x \left[ \mathrm e^{-q\tau_0^-} \mathbf{1}_{\{\tau_0^-<\infty\}} \right] \in \mathcal V^{(q)}_a.
\end{equation*}

\bigskip


The proofs of the theorems and corollaries in Section \ref{sec_mainresults} and the  next lemma in the case where the process $X$ has paths of unbounded variation use an approximation argument for which we need to introduce a sequence $(X^n)_{n \geq 1}$ of spectrally negative L\'evy processes of bounded variation.
To this end, suppose $X$ is a spectrally negative L\'evy process having paths of unbounded variation. 
with L\'evy triplet $(\gamma,\sigma,\Pi)$. Form for $n\geq1$ the  spectrally negative L\'evy process $X^n=(X^n_t)_{t\geq0}$ with L\'evy triplet $(\gamma,0,\Pi_n)$, whereby $$\Pi_n(\mathrm d\theta):=\mathbf 1_{\{\theta\geq 1/n\}}\Pi(\mathrm{d}\theta) + \sigma^2 n^2\delta_{1/n}(\mathrm d\theta),$$
with $\delta_{1/n}(\mathrm d\theta)$ standing for the Dirac point mass at $1/n$. 
Note that $X^n$ has paths of bounded variation with the so-called drift  given by $\drift_n:=\gamma+\int_{1/n}^1 \theta\Pi(\mathrm d\theta) + \sigma^2 n^2$, which means that $\drift_n$ may be negative for small $n$. Though we do have that $X^n$ is a true spectrally negative L\'evy process for large enough $n$  which is all that we need. By Bertoin \cite{bertoin1996}*{p.210}, $X^n$ converges almost surely to $X$ uniformly on compact time intervals. Denote by $\mathcal V^{(q)}_{a,n}$ the function space $\mathcal V^{(q)}_a$  corresponding to $X^n$.
The following lemma is the main result of this section.
\begin{lemma}\label{mainlemma}
Let $q,a\geq0$  and $v^{(q)}$ be a positive, measurable function on $\mathbb R$. Given a spectrally negative L\'evy process $X$, consider the following assumptions:
\begin{itemize}
 \item[(i)] If $X$ has  paths of bounded variation, assume that $v^{(q)}\in\mathcal V^{(q)}_a$ and
\begin{equation}\label{existencelaplace}
 \int_0^\infty \mathrm e^{-\lambda z}v^{(q)}(z)\mathrm dz<\infty , \quad \text{for $\lambda$ large enough.}
\end{equation}
\item[(ii)] If $X$  has paths of unbounded variation,  assume that $v^{(q)}$ is continuous and that there exists a sequence of functions $v_n^{(q)}\in \mathcal V^{(q)}_{a,n}$ satisfying \eqref{existencelaplace} such that $v_n^{(q)}$ converges to $v^{(q)}$ uniformly on compact subsets, i.e.,
\begin{equation}\label{uniformconv_v}
 \lim_{n\to\infty} \sup_{x\in[x_0,x_1]} |v^{(q)}_n(x) - v^{(q)}(x)|=0,  \quad \text{for all $x_0<x_1$},
\end{equation}
and such that for all $x_0\geq0$ there exists $K_{x_0}>0$, $n_0\geq1$ such that
\begin{equation}\label{uniformbound_v}
\text{$|v^{(q)}_n(x)|\leq K_{x_0}$ \quad for all $n\geq n_0$, $x\leq x_0$}.
\end{equation}
\end{itemize}
If (i) or (ii) holds, then we have for all $p\geq0$ and $x,b$ such that $a\leq x\leq b$,
\begin{equation}\label{identmainlemma}
 \begin{split}
 \mathbb E_x \left[  \mathrm e^{-p\tau_a^-} v^{(q)}(X_{\tau_a^-})\mathbf{1}_{\{\tau_a^-<\tau_b^+\}}   \right]
 = &   v^{(q)}(x) - (q-p)\int_a^x W^{(p)}(x-y) v^{(q)}(y)\mathrm{d}y \\
\\ &  - \frac{  W^{(p)}(x-a)  }{ W^{(p)}(b-a) } \left( v^{(q)}(b) - (q-p)\int_a^b W^{(p)}(b-y) v^{(q)}(y)\mathrm{d}y \right).
\end{split}
\end{equation}
\end{lemma}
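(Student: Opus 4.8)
The plan is to prove \eqref{identmainlemma} first when $X$ has paths of bounded variation, by a direct computation based on \eqref{E:discounted_deficit}, and then to obtain the unbounded variation case by approximation with the processes $X^n$. Throughout the bounded variation step I may assume $a=0$: by spatial homogeneity, replacing $X$ by $X-a$ started at $x-a$ turns $\tau_a^-,\tau_b^+$ into $\tau_0^-,\tau_{b-a}^+$ and $v^{(q)}(\cdot)$ into $v^{(q)}(\cdot+a)$, which still lies in the relevant space $\mathcal V^{(q)}_a$ and still satisfies \eqref{existencelaplace}, while the scale functions are unchanged. Assume then $a=0$, in which case $\sigma=0$ so the creeping term in \eqref{E:discounted_deficit} vanishes. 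Applying \eqref{E:discounted_deficit} with $f=v^{(q)}$ and discount rate $p$, and writing $g(y):=\int_y^\infty v^{(q)}(y-\theta)\,\Pi(\mathrm d\theta)$, the left-hand side of \eqref{identmainlemma} becomes
$$G_p(x):=\frac{W^{(p)}(x)}{W^{(p)}(b)}A_p(b)-A_p(x),\qquad A_r(z):=\int_0^z W^{(r)}(z-y)\,g(y)\,\mathrm dy,$$
since $W^{(p)}(x-y)=0$ for $y>x$; here \eqref{existencelaplace} guarantees that all the convolutions are finite and legitimises the formal manipulations below (which can be checked on Laplace transforms).

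The next step is to exploit the membership $v^{(q)}\in\mathcal V^{(q)}_a$. The same computation at discount rate $q$ gives $\e_x[\mathrm e^{-q\tau_0^-}v^{(q)}(X_{\tau_0^-});\tau_0^-<\tau_b^+]=\frac{W^{(q)}(x)}{W^{(q)}(b)}A_q(b)-A_q(x)$, while \eqref{mgproperty} evaluates the same expectation as $v^{(q)}(x)-\frac{W^{(q)}(x)}{W^{(q)}(b)}v^{(q)}(b)$. Equating and rearranging shows that $\frac{v^{(q)}(x)+A_q(x)}{W^{(q)}(x)}$ does not depend on $x$; call this constant $C$, so that $v^{(q)}+A_q=C\,W^{(q)}$ on $[0,\infty)$. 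I would then convert $A_q$ into $A_p$ using the resolvent relation $W^{(q)}=W^{(p)}+(q-p)\,W^{(p)}*W^{(q)}$ (where $*$ denotes convolution on $[0,\infty)$), which is exactly \eqref{fullconvolutions}. Convolving this relation with $g$ gives $A_q=A_p+(q-p)\,W^{(p)}*A_q$; substituting $A_q=C\,W^{(q)}-v^{(q)}$ and using \eqref{fullconvolutions} once more to replace $(q-p)\,W^{(p)}*W^{(q)}$ by $W^{(q)}-W^{(p)}$ yields
$$A_p=C\,W^{(p)}-v^{(q)}+(q-p)\,W^{(p)}*v^{(q)}.$$
Inserting this expression for $A_p(x)$ and $A_p(b)$ into $G_p(x)$, the terms proportional to $C$ cancel, and what remains is precisely the right-hand side of \eqref{identmainlemma} with $a=0$; undoing the translation recovers the general statement. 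The value of $C$ never has to be identified, which is what makes the cancellation clean.

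For the unbounded variation case I would pass to the limit in \eqref{identmainlemma}, which by the previous step holds for each approximating process $X^n$ with its function $v_n^{(q)}\in\mathcal V^{(q)}_{a,n}$ and its scale functions $W_n^{(p)}$. On the analytic side one uses that $W_n^{(p)}\to W^{(p)}$ locally uniformly (continuity of scale functions in the underlying Laplace exponent, since $\psi_n\to\psi$) together with \eqref{uniformconv_v} to pass to the limit in $v_n^{(q)}$ and in the integrals $\int_0^x W_n^{(p)}(x-y)v_n^{(q)}(y)\,\mathrm dy$. On the probabilistic side one uses that $X^n\to X$ almost surely, uniformly on compact time intervals (Bertoin), to deduce convergence of the passage times and of the overshoot positions, and then dominated convergence — the domination being furnished by \eqref{uniformbound_v} and the continuity of $v^{(q)}$ — to obtain $\e_x^n[\mathrm e^{-p\tau_a^-}v_n^{(q)}(X^n_{\tau_a^-});\tau_a^-<\tau_b^+]\to\e_x[\mathrm e^{-p\tau_a^-}v^{(q)}(X_{\tau_a^-});\tau_a^-<\tau_b^+]$.

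I expect the genuinely delicate part to be this last convergence of the probabilistic side: since first passage below a level occurs by a jump, one must argue carefully that $\tau_a^{-}$ for $X^n$ and the overshoot position $X^n_{\tau_a^-}$ converge almost surely to their limits for $X$, and that the boundary event $\{\tau_a^-=\tau_b^+\}$ is negligible (upward passage being continuous because $X$ has no positive jumps), so that \eqref{uniformbound_v} and the continuity of $v^{(q)}$ can legitimately be combined with dominated convergence. The bounded variation algebra, by contrast, is routine once the convolution identities \eqref{fullconvolutions} are brought to bear.
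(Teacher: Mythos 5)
Your proof is correct in the bounded variation case and follows the same two-stage skeleton as the paper's own proof: express the expectation via \eqref{E:discounted_deficit}, use membership of $\mathcal V^{(q)}_a$ to pin down the convolution of the jump integrand $g$ against $W^{(q)}$, convert this into a convolution against $W^{(p)}$, and substitute back; the unbounded variation case is then handled by the approximating sequence $X^n$. (Your preliminary reduction to $a=0$ by spatial homogeneity is valid.) The one place where you genuinely deviate is the conversion step. The paper identifies your constant explicitly, $C=\drift\, v^{(q)}(a)$, by setting $x=a$ and using $W^{(q)}(0)=1/\drift$, and then converts the $q$-convolution into a $p$-convolution by taking Laplace transforms in $b$, dividing by $\psi(\lambda)-p$ and inverting (its Steps 1--2); this is where it uses hypothesis \eqref{existencelaplace}. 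You instead keep $C$ abstract and obtain the same identity $A_p=C\,W^{(p)}-v^{(q)}+(q-p)\,W^{(p)}*v^{(q)}$ by pure convolution algebra from \eqref{fullconvolutions}. This is a legitimate and arguably cleaner execution: all functions involved are nonnegative, so Tonelli justifies associativity and commutativity of the convolutions, finiteness of $A_q$ comes for free from $A_q=C\,W^{(q)}-v^{(q)}$, and no Laplace inversion (hence, in effect, no use of \eqref{existencelaplace}) is needed at this step. The cancellation of the $C$-terms at the end checks out, and what remains is exactly the right-hand side of \eqref{identmainlemma}.

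In the unbounded variation case your outline coincides with the paper's, but the point you flag as delicate is precisely where the paper needs a device that your sketch lacks. Almost sure uniform convergence of $X^n$ on compact time intervals only yields convergence of first-passage quantities truncated at a deterministic time $t$ (the paper's \eqref{xiaowen1}); the untruncated passage times and indicators are problematic, for instance on the event $\{\tau_a^-=\infty\}$. The paper removes the truncation via the triangle-inequality bound \eqref{xiaowen2}, whose error terms are of order $\mathrm e^{-pt}\sup_{y\le a}|v^{(q)}_n(y)|$ and are controlled by \eqref{uniformbound_v}; this is exactly why it assumes without loss of generality $p>0$, recovering $p=0$ afterwards by letting $p\downarrow 0$ and using continuity of $p\mapsto W^{(p)}$. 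Your plan, which applies dominated convergence directly to the untruncated quantities for all $p\ge0$, would need either this truncation-plus-$p>0$ argument or a separate regularity argument on $\{\tau_a^-=\infty\}$ to give almost sure convergence of the integrand; as written this is a genuine (if repairable) gap, albeit one you correctly identified as the crux. A smaller remark: you invoke locally uniform convergence $W^{(p)}_n\to W^{(p)}$, which the paper only establishes later (via concavity of $\log W^{(q)}_n$); for this lemma, pointwise convergence from the continuity theorem for Laplace transforms plus monotonicity and dominated convergence suffices, which is what the paper uses.
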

\begin{proof}[\textbf{Proof}]
We first prove (in three steps) the lemma for the case that $X$ has paths of bounded variation, i.e. $\sigma=0$ and $\int_0^1\theta\Pi(\mathrm{d}\theta)<\infty$. Recall that  $\drift=\gamma+\int_0^1\theta\Pi(\mathrm{d}\theta)>0$ is the drift of $X$.
\paragraph{Step 1} We have by \eqref{mgproperty} and  \eqref{E:discounted_deficit}, for $a \leq x\leq b$,
 \begin{equation}\label{sigma_BV1}
 \begin{split}
 v^{(q)}(x)-\frac{W^{(q)}(x-a)}{W^{(q)}(b-a)} v^{(q)}(b) = &  \mathbb E_x \left[ \mathrm e^{-q\tau_a^-} v^{(q)}(X_{\tau_a^-}) \mathbf{1}_{\{\tau_a^-<\tau_b^+\}} \right] \\
= & \int_0^\infty \int_{(y,\infty)} v^{(q)}(y-\theta+a)\Pi(\mathrm d\theta) \\
& \times  \left[ \frac{ W^{(q)}(b-a-y) }{ W^{(q)}(b-a) }  W^{(q)}(x-a) - W^{(q)}(x-a-y) \right] \mathrm dy \\
= & \int_0^\infty \int_{(y,\infty)} v^{(q)}(y-\theta+a)\Pi(\mathrm d\theta) \frac{ W^{(q)}(b-a-y) }{ W^{(q)}(b-a) }  W^{(q)}(x-a) \mathrm{d}y \\
& - \int_0^\infty \int_{(y,\infty)} v^{(q)}(y-\theta+a)\Pi(\mathrm d\theta)  W^{(q)}(x-a-y)   \mathrm dy,
 \end{split}
\end{equation}
whereby the splitting of the integral in the last line is possible due to $\int_0^1\theta\Pi(\mathrm{d}\theta)<\infty$. 
By putting $x=a$ in \eqref{sigma_BV1} and recalling $W^{(q)}(0)=1/\drift$, we get for all $b\geq a$,
\begin{equation}
\label{sigmais0_step}
  \int_0^{\infty}\int_{(y,\infty)} v^{(q)}(y-\theta+a)\Pi(\mathrm{d}\theta)W^{(q)}(b-a-y)\mathrm{d}y \\ =
\drift  W^{(q)}(b-a) v^{(q)}(a) - v^{(q)}(b).
\end{equation}

\paragraph{Step 2}
Let $\lambda_0>0$ be large enough such that the Laplace transform of $v^{(q)}(x)$ exists for $\lambda>\lambda_0$, cf. condition \eqref{existencelaplace}. Taking Laplace transforms in $b$  on both sides of \eqref{sigmais0_step} and using \eqref{def_scale} leads to, for $\lambda>\Phi(q)\vee \lambda_0$,
\begin{equation}
\label{step2_a}
\int_0^\infty \mathrm{e}^{-\lambda y} \int_{(y,\infty)} v^{(q)}(y-\theta+a)\Pi(\mathrm{d}\theta)\mathrm{d}y \\
= \drift v^{(q)}(a)-(\psi(\lambda)-q)\mathrm{e}^{\lambda a}   \int_a^\infty\mathrm{e}^{-\lambda b} v^{(q)}(b)\mathrm{d}b. 
\end{equation}
Let $p\geq0$. Then using \eqref{step2_a}, we get for $\lambda>\Phi(q)\vee \Phi(p)\vee\lambda_0$,
\begin{equation*}
\begin{split}
 \int_a^\infty \mathrm{e}^{-\lambda b} \int_0^{\infty}\int_{(y,\infty)} & v^{(q)}(y-\theta+a)\Pi(\mathrm{d}\theta)W^{(p)}(b-a-y)\mathrm{d}y\mathrm{d}b \\
= &  \frac{\mathrm{e}^{-\lambda a}}{\psi(\lambda)-p} \left( \drift v^{(q)}(a)-(\psi(\lambda)-q)\mathrm{e}^{\lambda a}  \int_a^\infty\mathrm{e}^{-\lambda b} v^{(q)}(b)\mathrm{d}b \right)\\
= &  \frac{\mathrm{e}^{-\lambda a}}{\psi(\lambda)-p}  \drift v^{(q)}(a) - \int_a^\infty\mathrm{e}^{-\lambda b} v^{(q)}(b)\mathrm{d}b +  \frac{q-p}{\psi(\lambda)-p}\int_a^\infty\mathrm{e}^{-\lambda b} v^{(q)}(b)\mathrm{d}b . 
\end{split}
\end{equation*}
Now by Laplace inversion, we get for all $b\geq a$,
\begin{multline}
\label{step2_b}
  \int_0^{\infty}\int_{(y,\infty)} v^{(q)}(y-\theta+a)\Pi(\mathrm{d}\theta)W^{(p)}(b-a-y)\mathrm{d}y  \\
= \drift v^{(q)}(a) W^{(p)}(b-a)- v^{(q)}(b)   + (q-p)\int_a^b W^{(p)}(b-y)  v^{(q)}(y) \mathrm{d}y. 
\end{multline}

\paragraph{Step 3}
We know by \eqref{E:discounted_deficit} that  for $a\leq x\leq b$,
 \begin{equation*}
\begin{split}
  \mathbb E_x & \left[  \mathrm e^{-p\tau_a^-} v^{(q)}(X_{\tau_a^-})\mathbf{1}_{\{\tau_a^-<\tau_b^+\}}   \right]  \\
= & \int_0^\infty \int_{(y,\infty)} v^{(q)}(y-\theta+a)\Pi(\mathrm d\theta) \left[ \frac{ W^{(p)}(b-a-y) }{ W^{(p)}(b-a) }  W^{(p)}(x-a) - W^{(p)}(x-a-y) \right] \mathrm dy. \\
\end{split}
 \end{equation*}
Hence using \eqref{step2_b} twice, we get the required identity
\begin{equation*}
 \begin{split}
 \mathbb E_x \left[  \mathrm e^{-p\tau_a^-} v^{(q)}(X_{\tau_a^-})\mathbf{1}_{\{\tau_a^-<\tau_b^+\}}   \right]
= &    v^{(q)}(x) - (q-p)\int_a^x W^{(p)}(x-y) v^{(q)}(y)\mathrm{d}y \\
& - \frac{  W^{(p)}(x-a)  }{ W^{(p)}(b-a) } \left( v^{(q)}(b) - (q-p)\int_a^b W^{(p)}(b-y) v^{(q)}(y)\mathrm{d}y \right).
\end{split}
\end{equation*}

We now  prove the lemma for the case that $X$ has paths of unbounded variation. Hereby we assume without loss of generality that $p>0$ as the case $p=0$ can be dealt with by taking limits as $p\downarrow0$ using the fact that $W^{(p)}(x)$ is continuous and increasing (cf. \eqref{fullconvolutions}) in $p\geq0$. We denote by $W^{(p)}_n$ the $p$-scale function corresponding to the spectrally negative L\'evy process $X^n$. 
Further, let
\begin{equation*}
 \tau_{a,n}^-=\inf\{t>0:X_t^n<a\}, \quad \tau_{b,n}^+=\inf\{t>0:X^n_t>b\}.
\end{equation*}
Then since we have proved the lemma for the case of bounded variation,
\begin{multline}\label{mainident_n}
 \mathbb E_x \left[  \mathrm e^{-p\tau_{a,n}^-} v_n^{(q)}(X^n_{\tau_{a,n}^-})\mathbf{1}_{\{\tau_{a,n}^-<\tau_{b,n}^+\}}   \right]
=     v_n^{(q)}(x) - (q-p)\int_a^x W^{(p)}_n(x-y) v^{(q)}_n(y)\mathrm{d}y \\
  - \frac{  W_n^{(p)}(x-a)  }{ W_n^{(p)}(b-a) } \left( v_n^{(q)}(b) - (q-p)\int_a^b W_n^{(p)}(b-y) v_n^{(q)}(y)\mathrm{d}y \right).
\end{multline}
We aim to prove \eqref{identmainlemma} by taking limits  as $n\to\infty$ on both sides of \eqref{mainident_n}. By p.210 of Bertoin \cite{bertoin1996}, $X^n$ converges almost surely to $X$ uniformly on compact time intervals, i.e. for all $t>0$, $\lim_{n\to\infty} \sup_{s\in[0,t]} |X^n_s- X_s|=0$, $\mathbb P_x$-a.s. This implies that for any $t>0$, $\mathbb P_x$-a.s.,
\begin{equation*}
\tau_{a,n}^-\wedge t \rightarrow \tau_a^-\wedge t, \quad \tau_{b,n}^+\wedge t \rightarrow \tau_b^+\wedge t, \quad X^n_{\tau_{a,n}^-\wedge t} \to X_{\tau_{a}^-\wedge t}
\end{equation*}
and thus by \eqref{uniformconv_v}  we  have for any $t>0$, $\mathbb P_x$-a.s.,
\begin{equation*}
 \mathrm e^{-p(\tau_{a,n}^-\wedge t)} v^{(q)}_n(X^n_{\tau_{a,n}^-\wedge t} ) \mathbf 1_{\{\tau_{a,n}^-\wedge t  < \tau_{b,n}^+\wedge t \}} \to  \mathrm e^{-p(\tau_{a}^-\wedge t)}v^{(q)}(X_{\tau_{a}^-\wedge t})  \mathbf 1_{\{\tau_{a}^-\wedge t  < \tau_{b}^+\wedge t \}}.
\end{equation*}
or equivalently
\begin{equation}\label{xiaowen1}
 \mathrm e^{-p\tau_{a,n}^-} v^{(q)}_n(X^n_{\tau_{a,n}^-} ) \mathbf 1_{\{\tau_{a,n}^-  < \tau_{b,n}^+\wedge t \}} \to  \mathrm e^{-p\tau_{a}^-}v^{(q)}(X_{\tau_{a}^-})  \mathbf 1_{\{\tau_{a}^-  < \tau_{b}^+\wedge t \}}.
\end{equation}
Notice that $\mathbb P_x$-a.s.,
\begin{equation}\label{xiaowen3}
X^n_{\tau_{a,n}^-}\mathbf 1_{\{\tau_{a,n}^-  <\tau_{b,n}^+ \}}\leq a, \quad  X_{\tau_{a}^-}\mathbf 1_{\{\tau_{a}^-  <\tau_{b}^+ \}}\leq a,
\end{equation}
which implies further in combination with the triangle inequality,
\begin{equation}\label{xiaowen2}
\begin{split}
| \mathrm e^{-p\tau_{a,n}^- } v^{(q)}_n & (X^n_{\tau_{a,n}^- } )  \mathbf 1_{\{\tau_{a,n}^-   < \tau_{b,n}^+  \}}   - \mathrm e^{-p\tau_{a}^- }v^{(q)}(X_{\tau_{a}^- })  \mathbf 1_{\{\tau_{a}^-   < \tau_{b}^+  \}} | \\
 \leq & | \mathrm e^{-p\tau_{a,n}^- } v^{(q)}_n   (X^n_{\tau_{a,n}^- } )  \mathbf 1_{\{\tau_{a,n}^-< \tau_{b,n}^+  \}}    - \mathrm e^{-p \tau_{a,n}^- } v^{(q)}_n(X^n_{\tau_{a,n}^-} ) \mathbf 1_{\{\tau_{a,n}^-  < \tau_{b,n}^+\wedge t \}} | \\
& + | \mathrm e^{-p \tau_{a,n}^- } v^{(q)}_n(X^n_{\tau_{a,n}^-} ) \mathbf 1_{\{\tau_{a,n}^-  < \tau_{b,n}^+\wedge t \}}    - \mathrm e^{-p \tau_{a}^- }v^{(q)}(X_{\tau_{a}^-})  \mathbf 1_{\{\tau_{a}^-  < \tau_{b}^+\wedge t \}} | \\
& + | \mathrm e^{-p \tau_{a}^- }v^{(q)}(X_{\tau_{a}^-})  \mathbf 1_{\{\tau_{a}^-  < \tau_{b}^+\wedge t \}}    - \mathrm e^{-p\tau_{a}^- }v^{(q)}(X_{\tau_{a}^- })  \mathbf 1_{\{\tau_{a}^-   < \tau_{b}^+  \}} | \\
= &  |\mathrm e^{-p\tau_{a,n}^- } v^{(q)}_n   (X^n_{\tau_{a,n}^- } )\mathbf 1_{\{\tau_{a,n}^-   < \tau_{b,n}^+  \}}\mathbf 1_{\{t\leq \tau_{a,n}^-\}} | \\
& + | \mathrm e^{-p \tau_{a,n}^- } v^{(q)}_n(X^n_{\tau_{a,n}^-} ) \mathbf 1_{\{\tau_{a,n}^-  < \tau_{b,n}^+\wedge t \}}    - \mathrm e^{-p \tau_{a}^- }v^{(q)}(X_{\tau_{a}^-})  \mathbf 1_{\{\tau_{a}^-  < \tau_{b}^+\wedge t \}} | \\
& + |\mathrm e^{-p\tau_{a}^- } v^{(q)}   (X_{\tau_{a}^- } )\mathbf 1_{\{\tau_{a}^-   < \tau_{b}^+  \}}\mathbf 1_{\{t\leq \tau_{a}^-\}} |  \\
 \leq & \mathrm e^{-pt} \left( \mathbf 1_{\{t\leq \tau_{a,n}^-\}} \sup_{y\leq a} |v_n^{(q)}(y)| + \mathbf 1_{\{t\leq \tau_{a}^-\}} \sup_{y\leq a} |v^{(q)}(y)|  \right) \\
& + | \mathrm e^{-p \tau_{a,n}^- } v^{(q)}_n(X^n_{\tau_{a,n}^-} ) \mathbf 1_{\{\tau_{a,n}^-  < \tau_{b,n}^+\wedge t \}}    - \mathrm e^{-p \tau_{a}^- }v^{(q)}(X_{\tau_{a}^-})  \mathbf 1_{\{\tau_{a}^-  < \tau_{b}^+\wedge t \}} |.
\end{split}
\end{equation}
By  \eqref{uniformbound_v} and \eqref{xiaowen1} we can (since we assumed $p>0$) first choose a $t$ large enough and then choose $n$ large to make the right hand side of \eqref{xiaowen2} arbitrarily small, which means that $\mathbb P_x$-a.s.,
\begin{equation*}
 \mathrm e^{-p\tau_{a,n}^- } v^{(q)}_n  (X^n_{\tau_{a,n}^- } )  \mathbf 1_{\{\tau_{a,n}^-   < \tau_{b,n}^+  \}}   \to \mathrm e^{-p\tau_{a}^- }v^{(q)}(X_{\tau_{a}^- })  \mathbf 1_{\{\tau_{a}^-   < \tau_{b}^+  \}}.
\end{equation*}
By \eqref{xiaowen3} and \eqref{uniformbound_v} in combination with the dominated convergence theorem we can then conclude that
\begin{equation*}
 \begin{split}
\lim_{n\to\infty} \mathbb E_x   \left[  \mathrm e^{-p\tau_{a,n}^-} v_n^{(q)}(X^n_{\tau_{a,n}^-})\mathbf{1}_{\{\tau_{a,n}^-<\tau_{b,n}^+\}}   \right]
=   \mathbb E_x  \left[  \mathrm e^{-p\tau_{a}^-} v^{(q)}(X_{\tau_{a}^-})\mathbf{1}_{\{\tau_{a}^-<\tau_{b}^+\}}   \right].
\end{split}
\end{equation*}

\medskip

It remains to show that the right hand side of \eqref{mainident_n} converges to the right hand side of  \eqref{identmainlemma}. It is an easy exercise to show that the Laplace exponent of $X^n$ converges to the Laplace exponent of $X$ which means via \eqref{def_scale} that the Laplace transform of $W^{(p)}_n$ converges to the Laplace transform of $W^{(p)}$. Hence by the continuity theorem of Laplace transforms (cf. \cite{fellervol2}*{Theorem 2a in Section XIII.1}), $W^{(p)}_n(x)\to W^{(p)}(x)$ for all $x\geq0$ and $p\geq0$.  Using the dominated convergence theorem in combination with \eqref{uniformconv_v}, \eqref{uniformbound_v} and the fact  that scale functions are increasing, we deduce that indeed  the right hand side of \eqref{mainident_n} converges to the right hand side of  \eqref{identmainlemma}.
\end{proof}

\begin{lemma}
The conclusion of Lemma \ref{mainlemma} holds for (i) $v^{(q)}(x)=W^{(q)}(x)$ for any  $q,a\geq0$, (ii) $v^{(q)}(x)=Z^{(q)}(x)$ for any $q,a\geq0$ and (iii) $v^{(q)}(x)=W^{(q)}(x-y)$ for any $q\geq0$ and $0\leq y\leq a$.
\end{lemma}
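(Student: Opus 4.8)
The plan is to prove this by checking, for each of the three functions, that the hypotheses of Lemma \ref{mainlemma} are met; no new fluctuation identity is needed, since the substance of the lemma is already in place. The key observation is that membership in the relevant function spaces has essentially been established in the discussion preceding Lemma \ref{mainlemma}: there it was shown directly (via the strong Markov property together with \eqref{E:exitabove} and \eqref{E:exitbelow}) that $W^{(q)}\in\mathcal V^{(q)}_a$ and $Z^{(q)}\in\mathcal V^{(q)}_a$ for all $q,a\geq0$, and that $x\mapsto W^{(q)}(x-y)\in\mathcal V^{(q)}_a$ for $0\leq y\leq a$. So for the bounded-variation case only the Laplace-transform condition remains, while for the unbounded-variation case the work is to produce suitable approximating sequences.

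For $X$ of bounded variation I would invoke hypothesis (i) of Lemma \ref{mainlemma}. Membership in $\mathcal V^{(q)}_a$ is already known, so it suffices to verify \eqref{existencelaplace}. This is immediate from the explicit Laplace transforms: by \eqref{def_scale} the transform of $W^{(q)}$ equals $1/(\psi(\lambda)-q)$, that of $Z^{(q)}$ equals $\tfrac1\lambda\bigl(1+\tfrac{q}{\psi(\lambda)-q}\bigr)$, and that of $W^{(q)}(\cdot-y)$ equals $\mathrm e^{-\lambda y}/(\psi(\lambda)-q)$, each of which is finite for $\lambda>\Phi(q)\vee0$. Hence all three functions satisfy \eqref{existencelaplace} and the conclusion follows from Lemma \ref{mainlemma}(i).

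For $X$ of unbounded variation I would appeal to hypothesis (ii), taking as approximants $v^{(q)}_n=W^{(q)}_n$, $v^{(q)}_n=Z^{(q)}_n$ and $v^{(q)}_n=W^{(q)}_n(\cdot-y)$, where $W^{(q)}_n$ and $Z^{(q)}_n$ are the scale functions of the bounded-variation process $X^n$. Applying the argument preceding Lemma \ref{mainlemma} to $X^n$ (a genuine spectrally negative L\'evy process for $n$ large) shows $W^{(q)}_n,Z^{(q)}_n\in\mathcal V^{(q)}_{a,n}$ and $W^{(q)}_n(\cdot-y)\in\mathcal V^{(q)}_{a,n}$ for $0\leq y\leq a$, and each satisfies \eqref{existencelaplace} exactly as above; moreover the limit functions $W^{(q)}$, $Z^{(q)}$ and $W^{(q)}(\cdot-y)$ are continuous. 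It then remains to check the two convergence requirements \eqref{uniformconv_v} and \eqref{uniformbound_v}. The proof of Lemma \ref{mainlemma} already records that $W^{(q)}_n(x)\to W^{(q)}(x)$ pointwise for every $x\geq0$. The main obstacle — and the one genuinely new point — is to upgrade this to uniform convergence on compacts: here I would use that each $W^{(q)}_n$ is non-decreasing and the limit $W^{(q)}$ is continuous, so that by the standard fact that a pointwise-convergent sequence of monotone functions with continuous limit converges uniformly on compact sets, \eqref{uniformconv_v} holds for $W^{(q)}_n$. For $Z^{(q)}_n(x)=1+q\int_0^x W^{(q)}_n(y)\,\mathrm dy$ uniform convergence on compacts then follows by integrating, and for the shifted scale function by translation. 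The bound \eqref{uniformbound_v} I would obtain from monotonicity: for $x_0\geq0$ and $x\leq x_0$ one has $W^{(q)}_n(x)\leq W^{(q)}_n(x_0)$, and since $W^{(q)}_n(x_0)\to W^{(q)}(x_0)$ the sequence $(W^{(q)}_n(x_0))_n$ is bounded, giving the required $K_{x_0}$; the same argument, using $x-y\leq x_0$ and the monotonicity of $Z^{(q)}_n$, handles the remaining two cases. With \eqref{uniformconv_v} and \eqref{uniformbound_v} verified, Lemma \ref{mainlemma}(ii) yields the claim.
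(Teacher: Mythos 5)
Your proposal is correct, and its overall architecture coincides with the paper's: in the bounded variation case both verify membership in $\mathcal V^{(q)}_a$ from the discussion preceding Lemma \ref{mainlemma} together with \eqref{existencelaplace} via \eqref{def_scale}; in the unbounded variation case both take $W^{(q)}_n$ and $Z^{(q)}_n$ as the approximating sequences and obtain \eqref{uniformbound_v} from monotonicity plus pointwise convergence. (You handle case (iii) by directly shifting the approximants, while the paper simply deduces it from case (i) by spatial homogeneity --- a cosmetic difference.) Where you genuinely diverge is at the step you correctly single out as the crux: upgrading the pointwise convergence $W^{(q)}_n(x)\to W^{(q)}(x)$ to uniform convergence on compacts. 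The paper does this through log-concavity: $x\mapsto\log W^{(q)}_n(x)$ is concave and converges pointwise to $\log W^{(q)}(x)$, hence uniformly on compacts by a classical theorem on convex functions, and local Lipschitzness of the exponential transfers this to $W^{(q)}_n$. You instead invoke the elementary Dini/P\'olya-type fact that non-decreasing functions converging pointwise to a continuous limit converge uniformly on compact sets. Your route is leaner: it avoids the non-trivial input that scale functions are log-concave (which the paper must cite from the literature) and uses only monotonicity, which is already needed for \eqref{uniformbound_v}; the hypothesis it requires --- continuity of the limit $W^{(q)}$ on all of $\mathbb R$ --- holds precisely because $X$ has paths of unbounded variation, so that $W^{(q)}(0)=0$ and the extension by zero to the negative half-line is continuous, a point you rightly flag when checking hypothesis (ii). The paper's argument, by contrast, would survive even at a discontinuity-free cost only on $(0,\infty)$ but carries the extra structural information of log-concavity; for the purposes of this lemma that information is not needed, so your proof is a genuine simplification of this step.
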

\begin{proof}[\textbf{Proof}]
Note that case (iii) will follow from case (i) by spatial homogeneity of a L\'evy process. For cases (i) and (ii), from the considerations in the beginning of Section \ref{S:techlemmas} and \eqref{def_scale}, the assumptions in Lemma \ref{mainlemma} are clearly satisfied when $X$ has paths of bounded variation. When $X$ has paths of unbounded variation, we let the function $v^{(q)}_n\in\mathcal V^{(q)}_{a,n}$ in case (i), respectively case (ii), be  $W^{(q)}_n$  (the $q$-scale function corresponding to $X^n$), respectively
$Z^{(q)}_n(x):=1+q\int_0^x W_n^{(q)}(y)\mathrm dy$.   We have seen in the proof of Lemma \ref{mainlemma} that $W^{(q)}_n(x)$ converges to $W^{(q)}(x)$ and  since the $q$-scale function is increasing and positive,
it follows that \eqref{uniformbound_v} is satisfied in case (i). This implies further by the dominated convergence theorem, that $Z^{(q)}_n(x)$ converges to $Z^{(q)}(x)$ and as $Z^{(q)}_n$ is also positive and increasing, \eqref{uniformbound_v} is also satisfied in case (ii).

What remains to show is that the convergence of $W^{(q)}_n$ to $W^{(q)}$ and $Z^{(q)}_n$ to $Z^{(q)}$  is actually uniform on compact subsets. Since $x\mapsto \log W^{(q)}_n(x)$ is a concave function (cf. \cite{loeffen2}*{p.89}) and converges pointwise to $\log W^{(q)}(x)$, it follows that $\log W^{(q)}_n$ converges uniformly on compact subsets to $\log W^{(q)}$, cf. \cite{robertsvarberg}*{p.17, Theorem E}. As the exponential function is locally Lipschitz, it is then easy to show that also  $W^{(q)}_n$ converges to $W^{(q)}$ uniformly on compact subsets. It then easily follows that also $Z^{(q)}_n$ converges to $Z^{(q)}$ uniformly on compact subsets.
\end{proof}

\begin{remark}
\rm The proof of Lemma \ref{mainlemma} in the bounded variation case uses very similar steps as the proof of Theorem 16 in \cite{kyploeffen}. In order to make the connection clear between these two results, let us reformulate the left hand side of \eqref{identmainlemma} in a different setting. Let $Y$ be a spectrally negative L\'evy process with  L\'evy triplet $(\gamma,\sigma,\Pi)$ and killing rate $p\geq0$, which means that $Y$ is   a spectrally negative L\'evy process   killed at an independent exponentially distributed amount of time   with parameter $p$.  Further, let   $Z$ be another  spectrally negative L\'evy process  with   L\'evy triplet  $(\gamma',\sigma',\Pi')$ and killing rate $q\geq0$. Define the first passage times,
\begin{equation*}
\begin{split}
\tau_a^- = \inf \{t > 0 \colon Y_t < a \} , \quad & \tau_b^+ = \inf \{t > 0 \colon Y_t > b \}, \\
\kappa_a^- = \inf \{t > 0 \colon Z_t < a \} , \quad & \kappa_b^+ = \inf \{t > 0 \colon Z_t > b \}.
\end{split}
\end{equation*}
and denote by $W_Y$  the scale function associated to $Y$, which is defined as the $p$-scale function $W^{(p)}$ corresponding to the  unkilled spectrally negative L\'evy process with L\'evy triplet $(\gamma,\sigma,\Pi)$. Similarly, define $W_Z$. Also, let
 $v$ be a positive, measurable function satisfying
\begin{equation*}
\mathbb E_x \left[ v(Z_{\kappa_a^-}) \mathbf 1_{\{\kappa_a^-<\kappa_b^+\}} \right] = v(x) - \frac{W_Z(x-a)}{W_Z(b-a)} v(b).
\end{equation*}
Then Lemma \ref{mainlemma} provides, under some additional regularity assumptions on $v$, an expression for the quantity
\begin{equation}\label{remarkrefracted}
\mathbb E_x \left[ v(Y_{\tau_a^-}) \mathbf 1_{\{\tau_a^-<\tau_b^+\}}  \right]
\end{equation}
in the case where $\gamma=\gamma'$, $\sigma=\sigma'$ and $\Pi=\Pi'$ (i.e. only the killing rates differ), whereas Kyprianou and Loeffen \cite{kyploeffen}*{Theorem 16} provide a similar-looking expression for \eqref{remarkrefracted} with $v=W_Z$ in the case where  $\sigma=\sigma'$, $\Pi=\Pi'$ and $p=q$ (i.e. only the first parameters of the L\'evy triplets differ).
\end{remark}

\section{Proof of Theorem~\ref{T:stopbelow_double}}

We first prove the theorem in the case where $X$ has paths of bounded variation.
Fix $0 \leq a < b$ and $p,q \geq 0$. For $x \leq c$, define
$$
w(x) = \e_x \left[ \mathrm{e}^{- p \tau_0^- - q \int_0^{\tau_0^-} \ind_{(a,b)} (X_s) \mathrm{d}s }  ; \tau_0^-<\tau_c^+ \right] .
$$
Using the strong Markov property of $X$, the fact that $X$ is skip-free upward and \eqref{E:exitabove} and \eqref{E:exitbelow}, we can write, for $x < a$,
\begin{align}\label{E:under}
w(x) = &  \e_x \left[ \mathrm{e}^{-p \tau_0^-} ; \tau_0^- < \tau_a^+ \right]  + w(a) \e_x \left[ \mathrm{e}^{-p \tau_a^+} ; \tau_a^+ < \tau_0^- \right] \notag \\
= &  \zp(x) + \left( \frac{w(a)-\zp(a)}{\pscale(a)} \right) \pscale(x) .
\end{align}
Similarly, for $a \leq x < b$, using \eqref{E:under}, we have
\begin{align}\label{E:inside}
w(x) = &  w(b) \e_x \left[ \mathrm{e}^{-(p+q) \tau_b^+} ; \tau_b^+ < \tau_a^- \right] + \e_x \left[ \mathrm{e}^{-(p+q) \tau_a^-} w \left( X_{\tau_a^-}\right) ; \tau_a^- < \tau_b^+ \right] \notag \\
= &  w(b) \frac{W^{(p+q)}(x-a)}{W^{(p+q)}(b-a)} + \e_x \left[ \mathrm{e}^{-(p+q) \tau_a^-} \zp \left( X_{\tau_a^-}\right) ; \tau_a^- < \tau_b^+ \right] \notag \\
& \qquad + \left( \frac{w(a)-\zp(a)}{\pscale(a)} \right) \e_x \left[ \mathrm{e}^{-(p+q) \tau_a^-} \pscale \left( X_{\tau_a^-}\right) ; \tau_a^- < \tau_b^+ \right] .
\end{align}
Since one can show by the lemmas in Section \ref{S:techlemmas} that
\begin{equation}\label{vqiswq}
\e_x \left[ \mathrm{e}^{-(p+q) \tau_a^-} \pscale \left( X_{\tau_a^-}\right) ; \tau_a^- < \tau_b^+ \right] = \wapq(x) - \frac{W^{(p+q)}(x-a)}{W^{(p+q)}(b-a)} \wapq(b)
\end{equation}
and
$$
\e_x \left[ \mathrm{e}^{-(p+q) \tau_a^-} \zp \left( X_{\tau_a^-}\right) ; \tau_a^- < \tau_b^+ \right] = \zapq(x) - \frac{W^{(p+q)}(x-a)}{W^{(p+q)}(b-a)} \zapq(b) ,
$$
we get, for $a \leq x < b$,
\begin{multline}\label{E:inside2}
w(x) = \frac{W^{(p+q)}(x-a)}{W^{(p+q)}(b-a)} \left( w(b) - \zapq(b)  - \left( \frac{w(a)-\zp(a)}{\pscale(a)} \right) \wapq(b) \right) \\
+ \zapq(x)   + \left( \frac{w(a)-\zp(a)}{\pscale(a)} \right) \wapq(x).
\end{multline}
Recalling \eqref{convequiv2} one easily sees that \eqref{E:inside2} also holds for $x<a$.
Finally, for $b\leq x \leq c$, we have using \eqref{E:inside2},
\begin{align}\label{E:over}
w(x) = &  \e_x \left[ \mathrm{e}^{-p \tau_b^-} w \left( X_{\tau_b^-} \right)  ; \tau_b^- < \tau_c^+ \right] \notag \\
= &  \e_x \left[ \mathrm{e}^{-p \tau_b^-} W^{(p+q)} \left( X_{\tau_b^-} - a \right)  ; \tau_b^- < \tau_c^+ \right]  \frac { w(b) - \zapq(b)  - \left( \frac{w(a)-\zp(a)}{\pscale(a)} \right) \wapq(b)  } {W^{(p+q)}(b-a)}  \notag \\
&+ \e_x \left[ \mathrm{e}^{-p \tau_b^-} \zapq \left( X_{\tau_b^-} \right)  ; \tau_b^- < \tau_c^+ \right] \\ & + \left( \frac{w(a)-\zp(a)}{\pscale(a)} \right) \e_x \left[ \mathrm{e}^{-p \tau_b^-} \wapq \left( X_{\tau_b^-} \right)  ; \tau_b^- < \tau_c^+ \right] .
\end{align}
By the lemmas in Section \ref{S:techlemmas} and Fubini's theorem, we have,
\begin{equation}\label{defB}
 \begin{split}
\e_x  \Big[ \mathrm{e}^{-p \tau_b^-} \wapq & \left( X_{\tau_b^-}\right) ; \tau_b^- < \tau_c^+ \Big] \\
= &  \e_x \left[ \mathrm{e}^{-p \tau_b^-} W^{(p+q)} \left( X_{\tau_b^-}\right) ; \tau_b^- < \tau_c^+ \right] \\
& - q \int_0^a \pscale(y) \e_x \left[ \mathrm{e}^{-p \tau_b^-} W^{(p+q)} \left( X_{\tau_b^-} - y\right) ; \tau_b^- < \tau_c^+ \right] \mathrm{d}y \\
= &  W^{(p+q)}(x) - q\int_b^x W^{(p)}(x-y) W^{(p+q)}(y)\mathrm{d}y \\
& - \frac{  W^{(p)}(x-b)  }{ W^{(p)}(c-b) } \left( W^{(p+q)}(c) - q\int_b^c W^{(p)}(c-y) W^{(p+q)}(y)\mathrm{d}y \right) \\
& - q \int_0^a \pscale(y) \Bigg\{ W^{(p+q)}(x-y) - q\int_b^{x} W^{(p)}(x-z) W^{(p+q)}(z-y)\mathrm{d}z \\
& - \frac{  W^{(p)}(x-b)  }{ W^{(p)}(c-b) } \left( W^{(p+q)}(c-y) - q\int_b^c W^{(p)}(c-z) W^{(p+q)}(z-y)\mathrm{d}z \right) \Bigg\} \mathrm{d}y \\
= &  \wapq(x) - q\int_b^x W^{(p)}(x-z)\wapq(z)\mathrm dz \\
& - \frac{  W^{(p)}(x-b)  }{ W^{(p)}(c-b) } \left( \wapq(c) - q\int_b^c W^{(p)}(c-z)\wapq(z)\mathrm dz \right)
 \end{split}
\end{equation}
and, similarly,
\begin{equation}\label{defA}
 \begin{split}
 \e_x  \Big[ \mathrm{e}^{-p \tau_b^-} \zapq & \left( X_{\tau_b^-}\right) ;  \tau_b^- < \tau_c^+ \Big] \\
= &  Z^{(p+q)}(x) - q \int_b^x \pscale(x-y) Z^{(p+q)}(y) \mathrm{d}y \\
& - \frac{  W^{(p)}(x-b)  }{ W^{(p)}(c-b) } \left( Z^{(p+q)}(c) - q\int_b^c W^{(p)}(c-y) Z^{(p+q)}(y)\mathrm{d}y \right) \\
& - q \int_0^a Z^{(p)}(y) \Bigg\{ W^{(p+q)}(x-y) - q\int_b^{x} W^{(p)}(x-z) W^{(p+q)}(z-y)\mathrm{d}z \\
& - \frac{  W^{(p)}(x-b)  }{ W^{(p)}(c-b) } \left( W^{(p+q)}(c-y) - q\int_b^c W^{(p)}(c-z) W^{(p+q)}(z-y)\mathrm{d}z \right) \Bigg\} \mathrm{d}y \\
= &   \zapq(x) - q\int_b^x W^{(p)}(x-z)\zapq(z)\mathrm dz \\
& - \frac{  W^{(p)}(x-b)  }{ W^{(p)}(c-b) } \left( \zapq(c) - q\int_b^c W^{(p)}(c-z)\zapq(z)\mathrm dz \right) .
 \end{split}
\end{equation}

All is left to obtain are the expressions for $w(a)$ and $w(b)$. It is here that we need the assumption that $X$ has paths of bounded variation.
Setting $x=a$ in \eqref{E:inside2}, using that  $\qscale (0) \neq 0$ because $X$ has paths of bounded variation and noticing that (cf. \eqref{convequiv2})
$$
\wapq(a) = \pscale(a) \quad \text{and} \quad \zapq(a) = \zp(a),
$$
leads to
$$
\frac{w(b)-\zapq(b)}{\wapq(b)} = \frac{w(a)-\zp(a)}{\pscale(a)} .
$$
Using the above equation once in \eqref{E:inside2} and twice in \eqref{E:over},
\begin{multline}\label{E:over3_BV}
w(x) = \e_x   \left[ \mathrm{e}^{-p \tau_b^-} \zapq \left( X_{\tau_b^-}\right) ; \tau_b^- < \tau_c^+ \right] \\
\quad + \left( \frac{w(b)-\zapq(b)}{\wapq(b)} \right) \e_x   \left[ \mathrm{e}^{-p \tau_b^-} \wapq \left( X_{\tau_b^-}\right) ; \tau_b^- < \tau_c^+ \right] ,
\end{multline}
for $x\leq c$. Setting $x=b$ in \eqref{E:over3_BV} and using \eqref{defB} and \eqref{defA} then yields
\begin{equation}\label{E:over4_BV}
\begin{split}
w(b) = &   \zapq(b) - \frac{  W^{(p)}(0)  }{ W^{(p)}(c-b) } \left( \zapq(c) - q\int_b^c W^{(p)}(c-z)\zapq(z)\mathrm dz \right)   + \left( \frac{w(b)-\zapq(b)}{\wapq(b)} \right) \\
& \times \left( \wapq(b) - \frac{  W^{(p)}(0)  }{ W^{(p)}(c-b) } \left( \wapq(c) - q\int_b^c W^{(p)}(c-z)\wapq(z)\mathrm dz \right) \right).
\end{split}
\end{equation}
Solving \eqref{E:over3_BV} for $w(b)$ leads to
$$
w(b) =  \zapq(b)   - \frac{\zapq(c) - q\int_b^c W^{(p)}(c-z)\zapq(z)\mathrm dz }{ \wapq(c) - q\int_b^c W^{(p)}(c-z)\wapq(z)\mathrm dz} \wapq(b).
$$
Plugging this into \eqref{E:over3_BV}, using \eqref{defB}-\eqref{defA}, cancelling out a few terms and rearranging, we get for all $x\leq c$,
\begin{multline*}
w(x) = \zapq(x)    - q\int_b^x W^{(p)}(x-z)\zapq(z)\mathrm dz \\  - \frac{\zapq(c) - q\int_b^c W^{(p)}(c-z)\zapq(z)\mathrm dz }{ \wapq(c) - q\int_b^c W^{(p)}(c-z)\wapq(z)\mathrm dz} \left( \wapq(x) - q\int_b^x W^{(p)}(x-z)\wapq(z)\mathrm dz \right),
\end{multline*}
which proves Theorem \ref{T:stopbelow_double} when $X$ has paths of bounded variation.

Now we assume that $X$ has paths of unbounded variation. We assume here without loss of generality that $p>0$ as the boundary case $p=0$ can be dealt with by taking limits as $p\downarrow0$. In order to prove this case, we use a similar argument as in the proof of Lemma \ref{mainlemma}. Using the notation in that proof, we have since Theorem \ref{T:stopbelow_double} has been proved for the case where the spectrally negative L\'evy process has paths of bounded variation,
\begin{multline}\label{thm1_seqn}
\e_x \left[ \mathrm{e}^{- p \tau_{0,n}^- - q \int_0^{\tau_{0,n}^-} \ind_{(a,b)} (X^n_s) \mathrm{d}s }  ; \tau_{0,n}^-<\tau_{c,n}^+ \right] = \zapqn(x)    - q\int_b^x W^{(p)}_n(x-z)\zapqn(z)\mathrm dz \\  - \frac{\zapqn(c) - q\int_b^c W^{(p)}_n(c-z)\zapqn(z)\mathrm dz }{ \wapqn(c) - q\int_b^c W^{(p)}_n(c-z)\wapqn(z)\mathrm dz} \left( \wapqn(x) - q\int_b^x W^{(p)}_n(x-z)\wapqn(z)\mathrm dz \right),
\end{multline}
where
\begin{equation*}
 \begin{split}
  \wapqn(x) :=&   W^{(p+q)}_n(x) - q \int_0^a W_n^{(p+q)}(x-y) W^{(p)}_n(y) \mathrm{d}y, \\
\zapqn(x) :=&  Z^{(p+q)}_n(x) - q \int_0^a W_n^{(p+q)}(x-y) Z^{(p)}_n(y) \mathrm{d}y.
 \end{split}
\end{equation*}
As $X^n$ converges $\mathbb P_x$-almost surely to $X$ uniformly on compact time intervals, we have, similarly to \eqref{xiaowen1}, for all $t>0$, $\mathbb P_x$-a.s.,
\begin{equation*}
\mathrm{e}^{- p \tau_{0,n}^- - q \int_0^{\tau_{0,n}^-} \ind_{(a,b)} (X^n_s) \mathrm{d}s } \mathbf 1_{\{ \tau_{0,n}^-<\tau_{c,n}^+\wedge t \}} \to \mathrm{e}^{- p \tau_{0}^- - q \int_0^{\tau_{0}^-} \ind_{(a,b)} (X_s) \mathrm{d}s } \mathbf 1_{\{ \tau_{0}^-<\tau_{c}^+\wedge t \}}.
\end{equation*}
Further, one can show similarly to \eqref{xiaowen2},
\begin{multline*}
\left| \mathrm{e}^{- p \tau_{0,n}^- - q \int_0^{\tau_{0,n}^-} \ind_{(a,b)} (X^n_s) \mathrm{d}s } \mathbf 1_{\{ \tau_{0,n}^-<\tau_{c,n}^+  \}}  - \mathrm{e}^{- p \tau_{0}^- - q \int_0^{\tau_{0}^-} \ind_{(a,b)} (X_s) \mathrm{d}s } \mathbf 1_{\{ \tau_{0}^-<\tau_{c}^+  \}} \right| \\
\leq 2\mathrm e^{-pt} + \left| \mathrm{e}^{- p \tau_{0,n}^- - q \int_0^{\tau_{0,n}^-} \ind_{(a,b)} (X^n_s) \mathrm{d}s } \mathbf 1_{\{ \tau_{0,n}^-<\tau_{c,n}^+\wedge t \}} \to \mathrm{e}^{- p \tau_{0}^- - q \int_0^{\tau_{0}^-} \ind_{(a,b)} (X_s) \mathrm{d}s } \mathbf 1_{\{ \tau_{0}^-<\tau_{c}^+\wedge t \}}  \right|,
\end{multline*}
which yields (because $p>0$),
\begin{equation*}
\mathrm{e}^{- p \tau_{0,n}^- - q \int_0^{\tau_{0,n}^-} \ind_{(a,b)} (X^n_s) \mathrm{d}s } \mathbf 1_{\{ \tau_{0,n}^-<\tau_{c,n}^+  \}}  \to \mathrm{e}^{- p \tau_{0}^- - q \int_0^{\tau_{0}^-} \ind_{(a,b)} (X_s) \mathrm{d}s } \mathbf 1_{\{ \tau_{0}^-<\tau_{c}^+  \}}.
\end{equation*}
Thus by the dominated convergence theorem it follows that the left hand side of \eqref{thm1_seqn} converges, as $n\to\infty$, to $$\e_x \left[ \mathrm{e}^{- p \tau_{0}^- - q \int_0^{\tau_{0}^-} \ind_{(a,b)} (X_s) \mathrm{d}s }  ; \tau_{0}^-<\tau_{c}^+ \right].$$
On the other hand, we have seen that $W_n^{(q)}(x)$ and $Z^{(q)}_n(x)$ converge to $W^{(q)}(x)$ and $Z^{(q)}(x)$ respectively for all $q\geq0$ and   since $W_n^{(q)}(x), Z^{(q)}_n(x)$ are increasing, positive functions it follows by the dominated convergence theorem  that $\wapqn(x)\to \wapq(x)$ and $\zapqn(x)\to\zapq(x)$ for any $x$. Since $\wapqn(x), \zapqn(x)$ are also increasing, positive functions, it then follows by the dominated convergence theorem that the right hand side of \eqref{thm1_seqn} converges to
\begin{multline*}
 \zapq(x)    - q\int_b^x W^{(p)}(x-z)\zapq(z)\mathrm dz   \\ - \frac{\zapq(c) - q\int_b^c W^{(p)}(c-z)\zapq(z)\mathrm dz }{ \wapq(c) - q\int_b^c W^{(p)}(c-z)\wapq(z)\mathrm dz} \left( \wapq(x) - q\int_b^x W^{(p)}(x-z)\wapq(z)\mathrm dz \right).
\end{multline*}
This proves Theorem \ref{T:stopbelow_double} also for the case that $X$ has paths of unbounded variation.

\section{Proof of Theorem~\ref{T:stopabove}}
The proof of this theorem is very similar to the proof of Theorem \ref{T:stopbelow_double}.
We first prove it in the case where $X$ has paths of bounded variation.
Fix $0 \leq a < b\leq c$ and $p,q \geq 0$. For $x \leq c$, define
$$
w(x) = \e_x \left[ \mathrm{e}^{-p \tau_c^+ - q \int_0^{\tau_c^+} \mathbb{I}_{(a,b)} (X_s)  \mathrm{d}s} ; \tau_c^+<\tau_0^- \right] .
$$
Using the strong Markov property of $X$, the fact that $X$ is skip-free upward and \eqref{E:exitabove}, we can write, for $0\leq x < a$,
\begin{equation}\label{E:below}
w(x) = w(a) \e_x \left[  \mathrm{e}^{-p \tau_a^+}; \tau_a^+<\tau_0^- \right] = w(a) \frac{W^{(p)}(x)}{W^{(p)}(a)}.
\end{equation}
Similarly, for $a \leq x < b$, using \eqref{E:below} and \eqref{vqiswq}, we have
\begin{align}\label{E:between}
w(x) = &  w(b) \e_x \left[ \mathrm{e}^{-(p+q) \tau_b^+} ; \tau_b^+ < \tau_a^- \right] + \e_x \left[ \mathrm{e}^{-(p+q) \tau_a^-} w \left( X_{\tau_a^-}\right) ; \tau_a^- < \tau_b^+ \right] \notag \\
= &  w(b) \frac{W^{(p+q)}(x-a)}{W^{(p+q)}(b-a)} + \frac{w(a)}{W^{(p)}(a)}  \e_x \left[ \mathrm{e}^{-(p+q) \tau_a^-} W^{(p)} \left( X_{\tau_a^-}\right) ; \tau_a^- < \tau_b^+ \right] \notag \\
= &  w(b) \frac{W^{(p+q)}(x-a)}{W^{(p+q)}(b-a)} + \frac{w(a)}{W^{(p)}(a)} \left( \wapq(x) - \frac{W^{(p+q)}(x-a)}{W^{(p+q)}(b-a)} \wapq(b) \right).
\end{align}
Note that via   \eqref{convequiv2} one can easily show that \eqref{E:between} holds also for $x<a$.
Finally, for $b \leq x \leq c$, we have
\begin{align}\label{E:above}
w(x) = &  \e_x \left[ \mathrm{e}^{-p \tau_c^+} ; \tau_c^+ < \tau_b^- \right] + \e_x \left[ \mathrm{e}^{-p \tau_b^-} w \left( X_{\tau_b^-}\right) ; \tau_b^- < \tau_c^+ \right] \notag \\
= &  \frac{\pscale(x-b)}{\pscale(c-b)} + \frac{w(b) - \frac{w(a)}{W^{(p)}(a)} \wapq(b) }{ W^{(p+q)}(b-a)} \e_x \left[ \mathrm{e}^{-p \tau_b^-} W^{(p+q)} \left( X_{\tau_b^-} - a \right) ; \tau_b^- < \tau_c^+ \right] \notag \\
& \qquad  + \frac{w(a)}{W^{(p)}(a)} \e_x \left[ \mathrm{e}^{-p \tau_b^-} \wapq \left( X_{\tau_b^-}\right) ; \tau_b^- < \tau_c^+ \right]  .
\end{align}


We need to obtain the expressions for $w(a)$ and $w(b)$. As we assumed that  $X$ has paths of bounded variation, we have $\qscale (0) \neq 0$ and thus setting $x=a$ in \eqref{E:between} 
yields
\begin{equation*}
 \frac{ w(b)}{\wapq(b)} = \frac{ w(a)}{W^{(p)}(a)}.
\end{equation*}

Plugging this  into \eqref{E:between} and \eqref{E:above} using \eqref{defB} yields,
\begin{equation} \label{E:between3_BV}
 \begin{split}
 w(x) = &  w(b)\frac{ \wapq(x)}{\wapq(b)}, \quad x<b, \\
w(x) = &  \frac{\pscale(x-b)}{\pscale(c-b)}  + \frac{ w(b)}{\wapq(b)}  \Bigg\{ \wapq(x)  - q\int_b^x W^{(p)}(x-z)\wapq(z)\mathrm dz \\
& - \frac{  W^{(p)}(x-b)  }{ W^{(p)}(c-b) } \left( \wapq(c) - q\int_b^c W^{(p)}(c-z)\wapq(z)\mathrm dz \right) \Bigg\}, \quad b\leq x\leq c.
 \end{split}
\end{equation}

Setting $x=b$ in \eqref{E:between3_BV} gives us
\begin{equation*}
w(b) = \frac{  \wapq(b) }{ \wapq(c) - q\int_b^c W^{(p)}(c-z)\wapq(z)\mathrm dz }
\end{equation*}
and plugging this into \eqref{E:between3_BV} leads to
\begin{equation*}
 w(x) =\frac{  \wapq(x)  - q\int_b^x W^{(p)}(x-z)\wapq(z)\mathrm dz }{ \wapq(c) - q\int_b^c W^{(p)}(c-z)\wapq(z)\mathrm dz },
\end{equation*}
for all $x\leq c$.
This proves Theorem \ref{T:stopabove} when $X$ has paths of bounded variation. The case where $X$ has paths of unbounded variation follows using the same arguments as in the proof of Theorem \ref{T:stopbelow_double}.


%


\section{Proof of corollaries}

We will prove the corollaries only for $p>0$ and $q>0$. The cases where $p=0$ or $q=0$, then follow  by taking limits as $p\downarrow0$ or $q\downarrow0$. For the proofs we will make heavy use of the fact that (cf. \cite{kyprianou2006}*{Lemma 8.4}) the scale function can be written for $q,x\geq0$ as
\begin{equation}\label{scaledecomp}
 W^{(q)}(x)= \mathrm e^{\Phi(q)x} W_{\Phi(q)}(x),
\end{equation}
where $W_{\Phi(q)}(x)$ is the 0-scale function of the spectrally negative L\'evy process with Laplace exponent $\psi_{\Phi(q)}(\theta):=\psi(\Phi(q)+\theta)-q$. Further (cf. \cite{kyprianou2006}*{8.7}), $$W_{\Phi(q)}(\infty):=\lim_{x\to\infty}W_{\Phi(q)}(x)=\frac{1}{\psi_{\Phi(q)}'(0+)} =\frac{1}{\psi'(\Phi(q))}, $$ which implies that $W_{\Phi(q)}(\infty)<\infty$ except if simultaneously $q=0$ and $\psi'(0+)=0$.

\subsection{Proof of Corollary \ref{corol1}}
(i). Taking limits as $c\to\infty$ in Theorem \ref{T:stopbelow_double}, we see  that we need to show
\begin{equation}\label{helpcorol1}
\lim_{c\to\infty} \frac{\zapq(c) - q\int_b^c W^{(p)}(c-z)\zapq(z)\mathrm dz }{ \wapq(c) - q\int_b^c W^{(p)}(c-z)\wapq(z)\mathrm dz}
=  \frac{ \frac{p}{\Phi(p)} + q \int_a^b \mathrm e^{-\Phi(p)y} \zapq(y)\mathrm dy }{ 1  + q \int_a^b \mathrm e^{-\Phi(p)y} \wapq(y)\mathrm dy }.
\end{equation}
Using \eqref{convequiv1} and \eqref{scaledecomp}, it follows by the dominated convergence theorem (recalling that we assumed without loss of generality $p>0$),
\begin{equation*}
 \lim_{c\to\infty} \frac{ \wapq(c) - q\int_b^c W^{(p)}(c-z)\wapq(z)\mathrm dz}{W^{(p)}(c)}
=  1  + q \int_a^b \mathrm e^{-\Phi(p)y} \wapq(y)\mathrm dy
\end{equation*}
and similarly, using also \cite{kyprianou2006}*{Exercise 8.5(i)},
\begin{equation*}
 \lim_{c\to\infty} \frac{ \zapq(c) - q\int_b^c W^{(p)}(c-z)\zapq(z)\mathrm dz }{W^{(p)}(c)}
=   \frac{p}{\Phi(p)} + q \int_a^b \mathrm e^{-\Phi(p)y} \zapq(y)\mathrm dy.
\end{equation*}
Now \eqref{helpcorol1} follows.

\bigskip

(ii). Setting $b=c$ and taking limits as $c\to\infty$ in Theorem \ref{T:stopbelow_double}, we see that we need to show
\begin{equation}\label{helpcorol1ii}
\lim_{c\to\infty} \frac{\zapq(c) }{ \wapq(c)}
=  \frac{ \frac{p+q}{\Phi(p+q)} - q\int_0^a \mathrm e^{-\Phi(p+q)y} Z^{(p)}(y)\mathrm dy }{ 1 - q\int_0^a \mathrm e^{-\Phi(p+q)y} W^{(p)}(y)\mathrm dy  }.
\end{equation}
Recalling \eqref{convequiv2} and \eqref{scaledecomp}, it follows by the dominated convergence theorem (recalling that we assumed without loss of generality $p,q>0$),
\begin{equation*}
 \lim_{c\to\infty} \frac{ \wapq(c)}{W^{(p+q)}(c)}
= 1 - q\int_0^a \mathrm e^{-\Phi(p+q)y} W^{(p)}(y)\mathrm dy
\end{equation*}
and similarly, using again \cite{kyprianou2006}*{Exercise 8.5(i)},
\begin{equation*}
 \lim_{c\to\infty} \frac{ \zapq(c)}{W^{(p+q)}(c)}
=   \frac{p+q}{\Phi(p+q)} - q\int_0^a \mathrm e^{-\Phi(p+q)y} Z^{(p)}(y)\mathrm dy.
\end{equation*}
Now \eqref{helpcorol1ii} follows.

\subsection{Proof of Corollary \ref{corol2}}
(i). Using spatial homogeneity and Theorem \ref{T:stopabove} for sufficiently large $m$,
\begin{equation*}
 \begin{split}
  \e_x \Big[ \mathrm{e}^{- p \tau_c^+ - q \int_0^{\tau_c^+} \ind_{(a,b)} (X_s) \mathrm{d}s } & ;  \tau_c^+<\infty \Big] \\
= & \lim_{m\to\infty} \e_x \left[ \mathrm{e}^{- p \tau_c^+ - q \int_0^{\tau_c^+} \ind_{(a,b)} (X_s) \mathrm{d}s } ;  \tau_c^+<\tau_{-m}^- \right] \\
= & \lim_{m\to\infty}  \e_{x+m} \left[ \mathrm{e}^{- p \tau_{c+m}^+ - q \int_0^{\tau_{c+m}^+} \ind_{(a+m,b+m)} (X_s) \mathrm{d}s } ;  \tau_{c+m}^+<\tau_{0}^- \right] \\
= & \lim_{m\to\infty} \frac{  \mathcal W_{a+m}^{(p,q)}(x+m)  - q\int_{b+m}^{x+m} W^{(p)}(x+m-z)\mathcal W_{a+m}^{(p,q)}(z) \mathrm dz }{ \mathcal W_{a+m}^{(p,q)}(c+m) - q\int_{b+m}^{c+m} W^{(p)}(c+m-z)\mathcal W_{a+m}^{(p,q)}(z)\mathrm dz } \\
= & \lim_{m\to\infty} \frac{  \mathcal W_{a+m}^{(p,q)}(x+m)  - q\int_{b}^{x} W^{(p)}(x-y)\mathcal W_{a+m}^{(p,q)}(y+m) \mathrm dy }{ \mathcal W_{a+m}^{(p,q)}(c+m) - q\int_{b}^{c} W^{(p)}(c-y)\mathcal W_{a+m}^{(p,q)}(y+m)\mathrm dy } \\
= &  \frac{   \mathcal H^{(p,q)}(x-a) - q\int_{b}^{x} W^{(p)}(x-y) \mathcal H^{(p,q)}(y-a) \mathrm dy }{  \mathcal H^{(p,q)}(c-a) - q\int_{b}^{c} W^{(p)}(c-y) \mathcal H^{(p,q)}(y-a)\mathrm dy },
 \end{split}
\end{equation*}
where the last line follows by the dominated convergence theorem (noting that $\mathcal H^{(p,q)}$ is an increasing function) and
\begin{equation*}
\begin{split}
 \lim_{m\to\infty} \frac{ \mathcal W_{a+m}^{(p,q)}(x+m) }{ W^{(p)}(m) } = &
\lim_{m\to\infty} \frac{W^{(p)}(x+m) + q \int_a^x W^{(p+q)}(x-y)W^{(p)}(y+m)\mathrm d y  }{ W^{(p)}(m) } \\
= &  \mathrm e^{\Phi(p)x} +  q\int_a^x  W^{(p+q)}(x-y) \mathrm e^{\Phi(p)y} \mathrm d y \\
= &  \mathrm e^{\Phi(p)a} \mathcal H^{(p,q)}(x-a),
\end{split}
\end{equation*}
which follows by again  the dominated convergence theorem and \eqref{scaledecomp}.

\bigskip

(ii). Using spatial homogeneity and Theorem \ref{T:stopabove},
\begin{equation*}
 \begin{split}
  \e_x \Big[ \mathrm{e}^{- p \tau_c^+ - q \int_0^{\tau_c^+} \ind_{(-\infty,b)} (X_s) \mathrm{d}s } & ;  \tau_c^+<\infty \Big] \\
= & \lim_{m\to\infty} \e_x \left[ \mathrm{e}^{- p \tau_c^+ - q \int_0^{\tau_c^+} \ind_{(-m,b)} (X_s) \mathrm{d}s } ;  \tau_c^+<\tau_{-m}^- \right] \\
= & \lim_{m\to\infty}  \e_{x+m} \left[ \mathrm{e}^{- p \tau_{c+m}^+ - q \int_0^{\tau_{c+m}^+} \ind_{(0,b+m)} (X_s) \mathrm{d}s } ;  \tau_{c+m}^+<\tau_{0}^- \right] \\
= & \lim_{m\to\infty} \frac{ W^{(p+q)}(x+m)  - q\int_{b+m}^{x+m} W^{(p)}(x+m-z) W^{(p+q)}(z) \mathrm dz }{ W^{(p+q)}(c+m) - q\int_{b+m}^{c+m} W^{(p)}(c+m-z) W^{(p+q)}(z)\mathrm dz } \\
= & \lim_{m\to\infty} \frac{  W^{(p+q)}(x+m)  - q\int_{b}^{x} W^{(p)}(x-y)W^{(p+q)}(y+m) \mathrm dy }{  W^{(p+q)}(c+m) - q\int_{b}^{c} W^{(p)}(c-y)W^{(p+q)}(y+m)\mathrm dy } \\
= & \frac{  \mathrm e^{\Phi(p+q)x} \left( 1 - q\int_{0}^{x-b} \mathrm e^{-\Phi(p+q)y}W^{(p)}(y)   \mathrm dy \right) }{    \mathrm e^{\Phi(p+q)c} \left( 1 - q\int_{0}^{c-b} \mathrm e^{-\Phi(p+q)y}W^{(p)}(y)   \mathrm dy \right) },
 \end{split}
\end{equation*}
where the last equality is due to
\begin{multline*}
 \lim_{m\to\infty} \frac{ W^{(p+q)}(x+m)  - q\int_{b}^{x} W^{(p)}(x-y)W^{(p+q)}(y+m) \mathrm dy  }{ W^{(p+q)}(m) } \\
=  \mathrm e^{\Phi(p+q)x} -  q\int_b^x  W^{(p)}(x-y) \mathrm e^{\Phi(p+q)y} \mathrm d y,
\end{multline*}
which follows from  the dominated convergence theorem and \eqref{scaledecomp}.

\subsection{Proof of Corollary \ref{corol3}}
(i). Assume $\psi'(0+)>0$, which implies $\Phi(0)=0$.  Then since $\tau_c^+<\infty$ almost surely, we get using Corollary \ref{corol2}(i) with $p=0$, noting that $\mathcal H^{(0,q)}(x)=Z^{(q)}(x)$,
\begin{equation*}
\begin{split}
\e_x \left[ \mathrm{e}^{  - q \int_0^{\infty} \ind_{(a,b)} (X_s) \mathrm{d}s }  \right]
 = &  \lim_{c\to\infty} \e_x \left[ \mathrm{e}^{  - q \int_0^{\tau_c^+} \ind_{(a,b)} (X_s) \mathrm{d}s }  ; \tau_c^+<\infty \right] \\
= & \lim_{c\to\infty} \frac{ Z^{(q)}(x-a) - q\int_{b}^{x} W(x-y) Z^{(q)}(y-a) \mathrm dy }{Z^{(q)}(c-a) - q\int_{b}^{c} W(c-y) Z^{(q)}(y-a) \mathrm dy }.
\end{split}
\end{equation*}
Using \eqref{identH}, we deduce using the dominated convergence theorem,
\begin{equation*}
 \begin{split}
  \lim_{c\to\infty} \left(  Z^{(q)}(c-a) - q\int_{b}^{c} W(c-y) Z^{(q)}(y-a) \mathrm dy \right)
=  & \lim_{c\to\infty} \left( 1 + q\int_a^b W(c-y)  Z^{(q)}(y-a) \mathrm dy \right) \\
= &  1 + qW(\infty)\int_a^b   Z^{(q)}(y-a) \mathrm dy \\
= &  1+ \frac{q}{\psi'(0+)} \int_0^{b-a}   Z^{(q)}(y) \mathrm dy,
 \end{split}
\end{equation*}
which proves Corollary \ref{corol3}(i).

\bigskip

(ii). Similarly as for part (i), we have now using  Corollary \ref{corol2}(ii) with $p=0$,
\begin{equation*}
\begin{split}
\e_x \left[ \mathrm{e}^{  - q \int_0^{\infty} \ind_{(-\infty,b)} (X_s) \mathrm{d}s }  \right]
 = &  \lim_{c\to\infty} \e_x \left[ \mathrm{e}^{  - q \int_0^{\tau_c^+} \ind_{(-\infty,b)} (X_s) \mathrm{d}s }  ; \tau_c^+<\infty \right] \\
= & \lim_{c\to\infty} \frac{ \mathcal H^{(q,-q)}(x-b) }{ \mathcal H^{(q,-q)}(c-b) }.
\end{split}
\end{equation*}
Further, using \eqref{def_scale} and l'H\^opital's rule,
\begin{equation*}
 \begin{split}
\lim_{c\to\infty}  \mathcal H^{(q,-q)}(c-b)
 = & \lim_{c\to\infty}  \mathrm e^{\Phi(q)(c-b)} \left( 1 - q\int_{0}^{c-b} \mathrm e^{-\Phi(q)y}W(y)   \mathrm dy \right) \\
= &  \lim_{c\to\infty} \frac{q\int_{c-b}^\infty \mathrm e^{-\Phi(q)y}W(y)   \mathrm dy  }{\mathrm e^{-\Phi(q)(c-b)} } \\
= &    \frac{qW(\infty) }{\Phi(q)} \\
= &  \frac {q }{\psi'(0+)\Phi(q)},
 \end{split}
\end{equation*}
which proves Corollary \ref{corol3}(ii).

\bigskip

(iii). Assume $\psi'(0+)<0$, which implies $\Phi(0)>0$. Then since $\tau_{-m}^-<\infty$ almost surely for any $m>0$, we get using spatial homogeneity and Corollary \ref{corol1}(i) for $p=0$ and sufficiently large $m$,
\begin{equation*}
 \begin{split}
  \e_x \left[ \mathrm{e}^{  - q \int_0^{\infty} \ind_{(a,b)} (X_s) \mathrm{d}s }  \right]
= & \lim_{m\to\infty} \e_x \left[ \mathrm{e}^{ - q \int_0^{\tau_{-m}^-} \ind_{(a,b)} (X_s) \mathrm{d}s } ;  \tau_{-m}^-<\infty \right] \\
= & \lim_{m\to\infty}  \e_{x+m} \left[  \mathrm{e}^{ - q \int_0^{\tau_0^-} \ind_{(a+m,b+m)} (X_s) \mathrm{d}s } ;  \tau_{0}^-<\infty \right] \\
= &  Z^{(q)}(x-a)  - q\int_{b}^{x} W^{}(x-y)Z^{(q)}(y-a) \mathrm dy \\
& -  \lim_{m\to\infty} \frac{ \mathrm e^{-\Phi(0)m} q \mathrm e^{-\Phi(0)a}\int_{0}^{b-a} \mathrm e^{-\Phi(0)y} Z^{(q)}(y) \mathrm dy }{ 1  + \mathrm e^{-\Phi(0)m} q \int_{a}^{b} \mathrm e^{-\Phi(0)z} \mathcal W_{a+m}^{(0,q)}(z+m)\mathrm dz } \\
& \times \left( \mathcal W_{a+m}^{(0,q)}(x+m) - q\int_{b}^{x} W^{}(x-z) \mathcal W_{a+m}^{(0,q)}(z+m)\mathrm dz \right).
 \end{split}
\end{equation*}
Note that in the above we used that $\mathcal Z_a^{(0,q)}(x)=Z^{(q)}(x-a)$.
 Now we have by the dominated convergence theorem and \eqref{scaledecomp},
\begin{equation}\label{ratiowapq}
\begin{split}
 \lim_{m\to\infty} \frac{ \mathcal W_{a+m}^{(0,q)}(x+m) }{W(m)} = & \lim_{m\to\infty} \frac{ W(x+m) + q \int_a^x W^{(q)}(x-y)W(y+m)\mathrm d y }{W(m)} \\
= &  \mathrm e^{\Phi(0)x}  + q \int_a^x W^{(q)}(x-y)\mathrm e^{\Phi(0)y} \mathrm d y \\
= &   \mathrm e^{\Phi(0)a}\mathcal H^{(0,q)}(x-a)
\end{split}
\end{equation}
and thus also,
\begin{equation*}
\begin{split}
 \lim_{m\to\infty} \frac{ \mathrm e^{\Phi(0)m}   + q \int_{a}^{b} \mathrm e^{-\Phi(0)z} \mathcal W_{a+m}^{(0,q)}(z+m)\mathrm dz  }{W(m)}
= &   \frac1{W_{\Phi(0)}(\infty)}   + q \int_{a}^{b} \mathrm e^{-\Phi(0)z} \mathrm e^{\Phi(0)a}\mathcal H^{(0,q)}(z-a)\mathrm dz  \\
= &   \psi'(\Phi(0))   + q \int_{0}^{b-a} \mathrm e^{-\Phi(0)y} \mathcal H^{(0,q)}(y)\mathrm dy.
\end{split}
\end{equation*}
 Combining all three computations gives us Corollary \ref{corol3}(iii).

\bigskip

(iv). Similarly,  as for part (iii), we have now using  Corollary \ref{corol1}(ii) with $p=0$ and noting that $\mathcal Z_a^{(0,q)}(x)=Z^{(q)}(x-a)$,
\begin{equation*}
 \begin{split}
  \e_x \left[ \mathrm{e}^{  - q \int_0^{\infty} \ind_{(a,\infty)} (X_s) \mathrm{d}s }  \right]
= & \lim_{m\to\infty} \e_x \left[ \mathrm{e}^{ - q \int_0^{\tau_{-m}^-} \ind_{(a,\infty)} (X_s) \mathrm{d}s } ;  \tau_{-m}^-<\infty \right] \\
= & \lim_{m\to\infty}  \e_{x+m} \left[  \mathrm{e}^{ - q \int_0^{\tau_0^-} \ind_{(a+m,\infty)} (X_s) \mathrm{d}s } ;  \tau_{0}^-<\infty \right] \\
= &  Z^{(q)}(x-a)   -  \lim_{m\to\infty} \frac{ \frac{q}{\Phi(q)} - q\int_0^{a+m} \mathrm e^{-\Phi(q)y} \mathrm dy }{ 1 - q\int_0^{a+m} \mathrm e^{-\Phi(q)y} W(y)\mathrm dy  } \mathcal W_{a+m}^{(0,q)}(x+m) \\
= &  Z^{(q)}(x-a)   -  \mathrm e^{\Phi(0)a} \mathcal H^{(0,q)}(x-a)\lim_{m\to\infty} \frac{  W(m)  \frac1{\Phi(q)} \mathrm e^{-\Phi(q)(a+m)}  }{  \int_{a+m}^\infty \mathrm e^{-\Phi(q)y} W(y)\mathrm dy  },
 \end{split}
\end{equation*}
where in the last line we used \eqref{def_scale} and \eqref{ratiowapq}.
By \eqref{scaledecomp} and  l'H\^opital's rule,
\begin{equation*}
 \begin{split}
\lim_{m\to\infty} \frac{  W(m)  \frac1{\Phi(q)} \mathrm e^{-\Phi(q)(a+m)}  }{  \int_{a+m}^\infty \mathrm e^{-\Phi(q)y} W(y)\mathrm dy  } = &  \frac{ W_{\Phi(0)}(\infty)\mathrm e^{-\Phi(q)a} }{  \Phi(q)  } \lim_{m\to\infty} \frac{    \mathrm e^{-(\Phi(q)-\Phi(0))m}  }{  \int_{a+m}^\infty \mathrm e^{-(\Phi(q)-\Phi(0))y} W_{\Phi(0)}(y)\mathrm dy  } \\
= &   \frac{\Phi(q)-\Phi(0) }{ \Phi(q) }  \mathrm e^{-\Phi(0)a}
 \end{split}
\end{equation*}
and in combination with the previous computation, this proves Corollary \ref{corol3}(iv).

\section{Applications}

\subsection{Perpetual double knock-out corridor options in an exponential spectrally negative L\'evy model}

We assume that the price process of an underlying security is given by $(\mathrm e^{X_t})_{t\geq0}$ under the risk-neutral measure $\mathbb P$. For this model (which includes the Black-Scholes model) we would like to price a so-called (European) perpetual double knock-out corridor  option. In a corridor option (see e.g. Pechtl \cite{pechtl}), the payoff function is the amount of time the underlying spends in a given interval, the so-called corridor. For our  option we include the feature, similar to barrier options, that the option expires when the price process leaves a predetermined interval. In particular, if we assume that the corridor is given by $(\mathrm e^a, \mathrm e^b)$ and the option gets knocked out when the price process leaves the interval $[\mathrm e^0, \mathrm e^c]$ with $0\leq a< b\leq c<\infty$, then the price of the option equals
\begin{equation*}
V(x) := \mathbb E_x \left[\mathrm e^{-p(\tau_0^-\wedge\tau_c^+)} \int_0^{\tau_0^-\wedge\tau_c^+} \ind_{(a,b)}(X_s)\mathrm ds  \right],
\end{equation*}
where $p\geq0$ is the risk-free interest rate and $\mathrm e^x$ is the initial price of the security.

From Theorem \ref{T:stopbelow_double} and Theorem \ref{T:stopabove} in combination with the dominated convergence theorem (which justifies switching derivative and expectation), we have for $x\in[0,c]$,

\begin{equation*}
 \begin{split}
  V(x) \\
= &   \mathbb E_x \left[\mathrm e^{-p\tau_0^-} \int_0^{\tau_0^-} \ind_{(a,b)}(X_s)\mathrm ds  ;  \tau_0^-<\tau_c^+ \right] +  \mathbb E_x \left[\mathrm e^{-p\tau_c^+} \int_0^{\tau_c^+} \ind_{(a,b)}(X_s)\mathrm ds  ;  \tau_c^+<\tau_0^- \right] \\
= &  \left.\frac{-\mathrm d }{ \mathrm dq}\right|_{q=0}  \left( \e_x \left[ \mathrm{e}^{- p \tau_0^- - q \int_0^{\tau_0^-} \ind_{(a,b)} (X_s) \mathrm{d}s } ; \tau_0^- < \tau_c^+ \right] + \e_x \left[ \mathrm{e}^{- p \tau_c^+ - q \int_0^{\tau_c^+} \ind_{(a,b)} (X_s) \mathrm{d}s } ; \tau_c^+ < \tau_0^- \right] \right) \\
= &  \left.\frac{-\mathrm d }{ \mathrm dq}\right|_{q=0} \Bigg( \zapq(x)    - q\int_b^x W^{(p)}(x-z)\zapq(z)\mathrm dz \\
& - \frac{ \wapq(x) - q\int_b^x W^{(p)}(x-z)\wapq(z)\mathrm dz }{ \wapq(c) - q\int_b^c W^{(p)}(c-z)\wapq(z)\mathrm dz} \\
& \times \left(  \zapq(c) - q\int_b^c W^{(p)}(c-z)\zapq(z)\mathrm dz  - 1 \right) \Bigg).
 \end{split}
\end{equation*}
From \eqref{fullconvolutions}, we deduce with help of the dominated convergence theorem
\begin{equation*}
\frac{\mathrm d }{ \mathrm dq} \left( W^{(p+q)}(x)   \right)_{q=0} = \lim_{q\downarrow0} \frac{W^{(p+q)}(x)-W^{(p)}(x) }{q} = \int_0^x W^{(p)}(x-y)W^{(p)}(y)\mathrm dy
\end{equation*}
and thus using \eqref{convequiv2} and again the dominated convergence theorem and noting that $W^{(p)}(z)=0$ for $z<0$,
\begin{equation*}
\begin{split}
\frac{\mathrm d }{ \mathrm dq} \Big( \wapq(x) - q\int_b^x W^{(p)}(x-z)  & \wapq(z)\mathrm dz  \Big)_{q=0} \\
= & \int_a^x W^{(p)}(x-y)W^{(p)}(y)\mathrm dy - \int_b^x W^{(p)}(x-y)\mathcal W_a^{(p,0)}(y)\mathrm dy \\
= & \int_a^{b} W^{(p)}(x-y)W^{(p)}(y)\mathrm dy
\end{split}
\end{equation*}
and
\begin{equation*}
\begin{split}
\frac{\mathrm d }{ \mathrm dq} \Big( \zapq(x)  - q\int_b^x W^{(p)}(x-z) & \zapq(z)\mathrm dz  \Big)_{q=0} \\
= & \int_a^x W^{(p)}(x-y)Z^{(p)}(y)\mathrm dy - \int_b^x W^{(p)}(x-y)\mathcal Z_a^{(p,0)}(y)\mathrm dy \\
= & \int_a^{b} W^{(p)}(x-y)Z^{(p)}(y)\mathrm dy.
\end{split}
\end{equation*}
Hence using all of the above, we get in the end
\begin{equation}\label{priceoption}
 \begin{split}
V(x)
= &  \int_a^{b}  \left(  Z^{(p)}(y) - \frac{Z^{(p)}(c) -1  }{ W^{(p)}(c) }  W^{(p)}(y)   \right) \left( \frac{ W^{(p)}(x)}{ W^{(p)}(c) }  W^{(p)}(c-y) - W^{(p)}(x-y) \right) \mathrm dy .
 \end{split}
\end{equation}
The identity \eqref{priceoption} can also be derived as follows. Using Fubini's theorem and the Markov property, we get
\begin{equation*}
 \begin{split}
V(x) = & \int_0^\infty \mathrm e^{-pt} \int_0^t \mathbb P_x \left( \tau_0^-\wedge\tau_c^+\in\mathrm dt, X_s\in(a,b) \right) \mathrm ds \\
= & \int_0^\infty \left( \int_s^\infty \mathrm e^{-pt}  \mathbb P_x(\tau_0^-\wedge\tau_c^+\in\mathrm dt, X_s\in(a,b)) \right) \mathrm ds \\
= &  \int_0^\infty \left( \mathbb E_x\left[ \int_0^\infty \mathrm e^{-pt}   \mathbb P_x \left(\tau_0^-\wedge\tau_c^+\in\mathrm dt, s<\tau_0^-\wedge\tau_c^+, X_s\in(a,b) |\mathcal F_s \right) \right] \right) \mathrm ds \\
= &  \int_0^\infty \left( \mathbb E_x \left[ \mathrm e^{-ps} \mathbf 1_{\{s<\tau_0^-\wedge\tau_c^+, X_s\in(a,b)\}}  \int_0^\infty \mathrm e^{-pt}   \mathbb P_{X_s}(\tau_0^-\wedge\tau_c^+\in\mathrm dt) \right] \right) \mathrm ds \\
= & \int_a^b \mathbb E_y \left[ \mathrm e^{-p(\tau_0^-\wedge\tau_c^+)} \right]  \int_0^\infty \mathrm e^{-ps} \mathbb P_x(s<\tau_0^-\wedge\tau_c^+, X_s\in \mathrm dy)\mathrm ds.
 \end{split}
\end{equation*}
Now \eqref{priceoption} follows using \eqref{E:exitabove} and \eqref{E:exitbelow} together with the following known formula for
 the potential measure of $X$ killed on exiting $[0,c]$:
\begin{equation*}
 \int_0^\infty \mathrm e^{-ps} \mathbb P_x(s<\tau_0^-\wedge\tau_c^+, X_s\in \mathrm dy)\mathrm ds =  \left( \frac{ W^{(p)}(x)}{ W^{(p)}(c) }  W^{(p)}(c-y) - W^{(p)}(x-y) \right) \mathrm dy,
\end{equation*}
 c.f. \cite{kyprianou2006}*{Theorem 8.7}.

Using the above methods, we can of course also price corridor options with a single knock-out feature or with the corridor being an interval of infinite length.

\subsection{Probability of bankruptcy for an Omega L\'evy risk process}

Our results can also be applied to the so-called Omega model (for some specific rate functions) introduced in \cite{AGS11} and further investigated in \cite{GSY12}. Intuitively in such a model bankruptcy (instead of ruin) occurs at rate $\omega(x)$ when the surplus process $X=(X_s)_{s\geq 0}$  is at level $x$. To be more precise, given the rate function $\omega:\mathbb R\to[0,\infty)$  the bankruptcy time $T_\omega$ can be defined as
\begin{equation*}
 T_\omega=\inf\{t>0: \int_0^t \omega (X_s)\mathrm ds> \mathbf e_1 \},
\end{equation*}
where $\mathbf e_1$ is an independent exponentially distributed random variable with parameter 1.
Typically, the rate function $\omega$ is chosen to be a decreasing function equalling zero on the positive half line so that bankruptcy does not occur when the surplus is positive.

In order to connect with the results in Section \ref{sec_mainresults}, we choose for some $b,q>0$ the bankruptcy rate as
\[
\omega(x)= \begin{cases}
0 & \text{if $x\geq0$}, \\
            q & \text{if $-b\leq x<0$}, \\
\infty & \text{if $x<-b$}.
           \end{cases}
\]
Then bankruptcy occurs at rate $q$ when $X$ is between $-b$ and $0$ and bankruptcy occurs immediately when $X$ is below level $-b$.
Suppose that the positive loading condition holds, i.e. $\e[X_1]=\psi'(0+)>0$; this implies that bankruptcy does not happen almost surely. Then for any $x\in\mathbb R$, the probability that bankruptcy never occurs is
\begin{equation*}
\begin{split}
\mathbb{P}_x (T_\omega=\infty) = & \mathbb P_x \left( \int_0^\infty \omega (X_s)\mathrm ds \leq \mathbf e_1 \right) \\
= & \mathbb E_x \left[ \mathrm e^{- \int_0^\infty \omega (X_s)\mathrm ds } \right] \\
= & \mathbb{E}_x\left[\mathrm{e}^{-q\int_0^\infty\ind_{(-b,0)}(X_s)\mathrm ds };\tau^-_{-b}=\infty \right].
\end{split}
\end{equation*}
Hence by  spatial homogeneity,  Theorem~\ref{T:stopabove},  \eqref{convequiv1} and \eqref{scaledecomp} in combination with the dominated convergence theorem,
\begin{equation*}
\begin{split}
\mathbb{P}_x (T_\omega=\infty) = & \lim_{c\rightarrow\infty}\mathbb{E}_x\left[\mathrm{e}^{-q\int_0^{\tau^+_c} \ind_{(-b,0)}(X_s) \mathrm{d}s}; \tau^+_c<\tau^-_{-b} \right]\\
= & \lim_{c\rightarrow\infty}\mathbb{E}_{x+b}\left[\mathrm{e}^{-q\int_0^{\tau^+_{c+b}} \ind_{(0,b)}(X_s) \mathrm{d}s}; \tau^+_{c+b}<\tau^-_0\right]\\
= & \lim_{c\rightarrow\infty}
\frac{W^{(q)}(x+b)-q\int_b^{b+x}W(x+b-z)W^{(q)}(z) \mathrm{d}z }{ W^{(q)}(c+b)-q\int_b^{c+b} W(c+b-z)W^{(q)}(z) \mathrm{d}z}\\
= & \lim_{c\rightarrow\infty} \frac{W(x+b)+q\int_0^ bW(x+b-z)W^{(q)}(z) \mathrm{d}z }{ W(c+b)+q\int_0^bW(c+b-z)W^{(q)}(z) \mathrm{d}z}\\
= & \frac{W(x+b)+q\int_0^bW(x+b-z)W^{(q)}(z) \mathrm{d}z}{\frac{1}{\psi'(0+)}+\frac{q}{\psi'(0+)}\int_0^bW^{(q)}(z) \mathrm{d}z}\\
= & \psi'(0+) \frac{W(x+b)+q\int_0^bW(x+b-z)W^{(q)}(z) \mathrm{d}z }{ Z^{(q)}(b)}.
\end{split}
\end{equation*}
Similarly,  the probability that bankruptcy occurs due to the surplus process dropping below the level $-b$ is given by
\begin{equation*}
\begin{split}
\mathbb{P}_x(X_{T_\omega}< -b, T_\omega<\infty)
= & \mathbb P_x \left( \int_0^{\tau_{-b}^-} \omega (X_s)\mathrm ds \leq \mathbf e_1, \tau_{-b}^-<\infty \right) \\
= &  \mathbb{E}_x\left[\mathrm{e}^{-q\int_0^{\tau^-_{-b}} \ind_{(-b,0)}(X_s) \mathrm{d}s};\tau^-_{-b}<\infty\right]\\
= &   \mathbb{E}_{x+b}\left[\mathrm{e}^{-q\int_0^{\tau^-_0} \ind_{(0,b)}(X_s) \mathrm{d}s};\tau^-_0<\infty\right],
\end{split}
\end{equation*}
which, by Corollary \ref{corol1} and \eqref{convequiv1}, equals
\begin{equation*}
\begin{split}
\mathbb{P}_x(X_{T_\omega}< -b, & T_\omega<\infty) \\
 = & Z^{(q)}(x+b)-q\int_b^{x+b}W(x+b-z)Z^{(q)}(z) \mathrm{d}z \\
& - \frac{\psi'(0+)+q\int_0^b Z^{(q)}(y) \mathrm{d}y}{1+q\int_0^b W^{(q)}(y) \mathrm{d}y} \left(W^{(q)}(x+b)-q\int_b^{b+x}W(x+b-z)W^{(q)}(z) \mathrm{d}z\right) \\
= & 1+q\int_0^{b}W(x+b-z)Z^{(q)}(z) \mathrm{d}z \\
& -\frac{\psi'(0+) + q\int_0^b Z^{(q)}(y) \mathrm{d}y }{ Z^{(q)}(b)}
\left(W(x+b)+q\int_0^{b}W(x+b-z)W^{(q)}(z) \mathrm{d}z\right).
\end{split}
\end{equation*}
In addition, the probability that bankruptcy occurs while the surplus is between $-b$ and $0$ is
\begin{equation*}
\begin{split}
\mathbb{P}_x(-b \leq X_{T_\omega}<0, T_\omega<\infty )
= & 1-\mathbb{P}_x (T_\omega=\infty)-\mathbb{P}_x(X_{T_\omega}< - b, T_\omega<\infty)\\
= &  -q\int_0^{b}W(x+b-z)Z^{(q)}(z) \mathrm{d}z\\
&\quad+\frac{q\int_0^b Z^{(q)}(y) \mathrm{d}y}{Z^{(q)}(b)}\left(W(x+b)+q\int_0^{b}W(x+b-z)W^{(q)}(z) \mathrm{d}z\right).
\end{split}
\end{equation*}


%
%
%
\section{Acknowledgements}

Funding in support of this work was provided by the Natural Sciences and Engineering Research Council of Canada (NSERC). J.-F. Renaud also acknowledges financial support from the Fonds de recherche du Qu\'ebec - Nature et technologies (FRQNT) and the Insti\-tut de finance math\'ematique de Montr\'eal (IFM2).

%
%
\begin{bibdiv}
 \begin{biblist}

\bib{AGS11}{article}{
   author={Albrecher, H.},
   author={Gerber, H.},
   author={Shiu, E.},
   title={The optimal dividend barrier in
the Gamma-Omega model},
   journal={European Actuarial Journal},
   volume={1},
   date={2011},
   number={1},
   pages={43-55},
}

\bib{bertoin1996}{book}{
   author={Bertoin, J.},
   title={L\'evy processes},
   series={Cambridge Tracts in Mathematics},
   volume={121},
   publisher={Cambridge University Press},
   place={Cambridge},
   date={1996},
   pages={x+265},
}

\bib{fellervol2}{book}{
   author={Feller, W.},
   title={An introduction to probability theory and its applications. Vol.
   II. },
   series={Second edition},
   publisher={John Wiley \& Sons Inc.},
   place={New York},
   date={1971},
}

\bib{GSY12}{misc}{
   author={Gerber, H.},
   author={Shiu, E.},
   author={Yang, H.},
   title={The Omega model: From
bankruptcy to occupation times in the red},
   note={To appear in European Actuarial Journal},
   date={2012},
        }

\bib{kuznetsovetal2011}{misc}{
author={Kuznetsov, A.},
author={Kyprianou, A.E.},
author={Rivero, V.},
title={The theory of scale functions for spectrally negative L\'evy processes},
note={To appear in L\'evy Matters, Springer Lecture Notes in Mathematics.},
date={2012},
}

\bib{kyprianou2006}{book}{
   author={Kyprianou, A.E.},
   title={Introductory lectures on fluctuations of L\'evy processes with
   applications},
   series={Universitext},
   publisher={Springer-Verlag},
   place={Berlin},
   date={2006},
   pages={xiv+373},
}

\bib{kyploeffen}{article}{
   author={Kyprianou, A. E.},
   author={Loeffen, R. L.},
   title={Refracted L\'evy processes},
   journal={Ann. Inst. Henri Poincar\'e Probab. Stat.},
   volume={46},
   date={2010},
   number={1},
   pages={24--44},
}

\bib{kyprianouetal2012}{misc}{
author={Kyprianou, A.E.},
author={Pardo, J.C.},
author={P\'erez, J.L.},
title={Occupation times of refracted L\'evy processes},
note={arXiv:1205.0756v1 [math.PR]},
date={2012},
}

\bib{landriaultetal2011}{article}{
   author={Landriault, D.},
   author={Renaud, J.-F.},
   author={Zhou, X.},
   title={Occupation times of spectrally negative L\'evy processes with
   applications},
   journal={Stochastic Process. Appl.},
   volume={121},
   date={2011},
   number={11},
   pages={2629--2641},
}

\bib{loeffen2}{article}{
   author={Loeffen, R. L.},
   title={An optimal dividends problem with a terminal value for spectrally
   negative L\'evy processes with a completely monotone jump density},
   journal={J. Appl. Probab.},
   volume={46},
   date={2009},
   number={1},
   pages={85--98},
}

\bib{pechtl}{article}{
   author={Pechtl, A.},
   title={Some applications of occupation times of Brownian motion with
   drift in mathematical finance},
   journal={J. Appl. Math. Decis. Sci.},
   volume={3},
   date={1999},
   number={1},
   pages={63--73},
}

\bib{robertsvarberg}{book}{
   author={Roberts, A. W.},
   author={Varberg, D. E.},
   title={Convex functions},
   note={Pure and Applied Mathematics, Vol. 57},
   publisher={Academic Press [A subsidiary of Harcourt Brace Jovanovich,
   Publishers], New York-London},
   date={1973},
}

 \end{biblist}

\end{bibdiv}

%
%
\end{document}